\documentclass{elsarticle}

\usepackage{lineno,hyperref}
\modulolinenumbers[5]

\journal{Nonlinear Analysis, Theory, Methods \& Applications}

%%%%%%%%%%%%%%%%%%%%%%%

%%%%%%%%%%%%%%%%%%%%%%%%%%%%%%%%%%%%%%%%%%%%%%%%%
%PACKAGES & ENVIROMENT
%%%%%%%%%%%%%%%%%%%%%%%%%%%%%%%%%%%%%%%%%%%%%%%%%

\usepackage[utf8]{inputenc}
\usepackage{amsmath}
\usepackage{amssymb}
\usepackage{mathrsfs}
\usepackage{graphics}
\usepackage{color}
\usepackage{amsthm}
\usepackage{paralist}
\usepackage{dsfont}
\usepackage{verbatim}
\usepackage{bm}
% \usepackage[notcite,notref]{showkeys}

% sorted wrt citation order with
%\bibliographystyle{elsarticle-num}

% sorted wrt author
\bibliographystyle{elsart-num-sort}

\allowdisplaybreaks

\renewcommand{\tilde}{\widetilde}
\renewcommand{\hat}{\widehat}
\renewcommand{\bar}{\overline}

\theoremstyle{definition}
\newtheorem{definition}{Definition}[section]
\newtheorem{remark}[definition]{Remark}
\newtheorem{assumption}[definition]{Assumption}
\theoremstyle{plain}
\newtheorem{lemma}[definition]{Lemma}
\newtheorem{thm}[definition]{Theorem}
\newtheorem{prop}[definition]{Proposition}

\numberwithin{equation}{section}

\newcommand{\dd}{\,\mathrm{d}}

\newcommand{\dff}{\mathrm{D}}

\newcommand{\auxil}{\mathcal{A}}
\newcommand{\metr}{\mathbf{d}}

\newcommand{\ent}{\mathcal{E}}

\newcommand{\M}{\mathbf{M}}
\newcommand{\mob}{\mathbf{m}}

\newcommand{\X}{\mathbf{X}}

\newcommand{\N}{{\mathbb{N}}}

\newcommand{\R}{{\mathbb{R}}}
\newcommand{\Rp}{{\mathbb{R}_{>0}}}

\newcommand{\flow}{\mathsf{S}}

\newcommand{\Matn}{\R^{n\times n}}
\newcommand{\tT }{\mathrm{T}}

\newcommand{\measm}{\mathscr{M}(\R;S)}

\newcommand{\inn}[1]{\mathrm{int}(#1)}

\newcommand{\hess}{\nabla^2_z}

\newcommand{\mom}[1]{\m_2(#1)}
\newcommand{\mss}[1]{\m_0(#1)}
\newcommand{\pot}{\mathcal{V}}
\newcommand{\zref}{{\bar{z}}}
\newcommand{\Xaux}{\mathbf{X}_{\zref}}

\newcommand{\gau}[1]{\left\lfloor#1\right\rfloor}
\newcommand{\ban}{\mathbf{Y}}
\newcommand{\dom}{\mathrm{Dom}}

\newcommand{\calH}{{\mathcal{H}}}

\newcommand{\W}{\mathbf{W}}
\newcommand{\m}{\bm{\mathfrak{m}}}

\newcommand{\lebm}[1]{\mathcal{L}\left(#1\right)}
\newcommand{\nonl}{\mathfrak{N}}
\newcommand{\loc}{\text{loc}}

\newcommand{\argmin}{\operatornamewithlimits{argmin}}

\begin{document}

\begin{frontmatter}
\author[tu]{Daniel Matthes\corref{corauth}}
\ead{matthes@ma.tum.de}
\author[tu]{Jonathan Zinsl}
\ead{zinsl@ma.tum.de}
\address[tu]{Zentrum f\"ur Mathematik \\ Technische Universit\"at M\"unchen \\ 85747 Garching, Germany}
\title{Existence of solutions for a class of fourth order cross-diffusion systems of gradient flow type\tnoteref{t1}}
\tnotetext[t1]{This research was supported by the DFG Collaborative Research Center TRR 109, ``Discretization in Geometry and Dynamics''.}
\cortext[corauth]{Corresponding author}

\begin{abstract}
  This article is concerned with the existence and the long time behavior of weak solutions 
  to certain coupled systems of fourth-order degenerate parabolic equations of gradient flow type.
  The underlying metric is a Wasserstein-like transportation distance for vector-valued functions,
  with nonlinear mobilities in each component.
  Under the hypothesis of (flat) convexity of the driving free energy functional,
  weak solutions are constructed by means of the variational minimizing movement scheme for metric gradient flows. 
  The essential regularity estimates are derived by variational methods.
\end{abstract}

\begin{keyword}
  Fourth-order system \sep 
  gradient flow \sep 
  minimizing movement scheme \sep 
  modified Wasserstein distance \sep 
  weak solution
  \MSC[2010]{Primary: 35K46; Secondary: 35D30, 49J40}
\end{keyword}

\end{frontmatter}

%%%%%%%%%%%%%%%%%%%%%%%

\section{Introduction}\label{ch:dsintro}
In this article, we study the existence and the large-time behavior of vector valued solutions $u:\R_{\ge 0}\times\R\to S\subset\R^n$
to the following coupled system of nonlinear fourth-order equations in one spatial dimension:
\begin{align}\label{eq:equation4}
  \partial_t u + \partial_x\bigg(\M(u) \bigg[\partial^2_x\nabla_pf(\partial_xu,u)-\partial_x\nabla_zf(\partial_xu,u)\bigg]\bigg) = 0,
  % \begin{split}
  %   \partial_t u=\partial_x\bigg(\bigg.\M( u)\bigg[\bigg.&\nabla^2_{z}f( \partial_x u, u) \partial_x u-\partial_x(\nabla^2_{p}f(\partial_x u, u) \partial_{x}^2 u)\\&\quad-\partial_x(\nabla_{pz}f( \partial_x u, u)) \partial_x u\bigg.\bigg]\bigg.\bigg),
  % \end{split}
\end{align}
for $t>0$ and $x\in\R$,
subject to the initial condition $u(0,\cdot)=u^0$ for a given function $u^0:\R\to S$.
We refer to the cuboid 
\begin{align}
  \label{eq:cuboid}
  S=[S^\ell_1,S^r_1]\times\cdots\times[S^\ell_n,S^r_n]  \subset\R^n
\end{align}
with given lower-left and upper-right corners $S^\ell,S^r\in\R^n$, respectively, as \emph{value space}.
The \emph{mobility matrix} $\M(\cdot):S\to\R^{n\times n}$ is assumed to be \emph{fully decoupled} in the sense of \cite{zm2014},
that is:
\begin{align}
  \label{eq:fullydecoupled}
  \M(z) =
  \begin{pmatrix}
    \mob_1(z_1) & & & \\ & \mob_2(z_2) & & \\ & & \ddots & \\ & & & \mob_n(z_n)
  \end{pmatrix},
\end{align}
with $n$ nonnegative and concave (scalar) mobility functions $\mob_k:[S^\ell_k,S^r_k]\to\R$. 
For the precise assumptions on $\M$, see Definition \ref{def:H} below.
Finally, $f:\R^n\times S\to\R$ is a prescribed smooth \emph{free energy density} $f=f(p,z)$,
with properties that are specified in Assumption \ref{def:E} below.
Notice that the gradients $\nabla_z$ and $\nabla_p$ in \eqref{eq:equation4} 
act with respect to the $n$ components of $u$ and of $\partial_xu$, respectively.

Introducing the gradient-dependent free energy functional
\begin{align}\label{eq:energy}
  \ent(u)=\int_\R f(\partial_x u,u)\dd x,
\end{align}
it becomes apparent that \eqref{eq:equation4} is --- formally --- a gradient flow in the potential landscape of $\ent$,
\begin{align}
  \label{eq:pdesystem}
  \partial_t u=\partial_x\left[\M(u)\,\partial_x\frac{\delta\ent}{\delta u}(u)\right].
\end{align}
The main purpose of the paper at hand is to carry out a new approach 
--- alternative to the already existing ones --- 
to prove the existence of weak solutions to equations of type \eqref{eq:equation4}
% and to study their long time behavior
on the basis of the indicated gradient flow structure,
using variational methods.
The key to fit \eqref{eq:pdesystem} % with \eqref{eq:fullydecoupled}
into the theory of gradient flows in metric spaces 
are the recent results \cite{zm2014} concerning 
the induced metrics $\W_\M$ on the space $\measm$ of $S$-valued measurable functions.
A brief review of some essential properties of $\W_\M$ is provided in Section \ref{sct:WM}.
Gradient flows in $\W_\M$ for simpler functionals than \eqref{eq:energy} have been studied in \cite{zm2014},
following up on the results \cite{dns2009,lisini2010,carrillo2010} for scalar equations with nonlinear mobility functions.

The initial motivation for studying \eqref{eq:equation4} 
is its similarity to multi-component Cahn-Hilliard systems 
(see e.g. \cite{Gurtin} for a review on their derivation),
which are of the general form 
\begin{align}
  \label{eq:ch}
  \partial_tu-\partial_x\big(\widetilde{\M}(u)\partial_x\mu(u)\big) = 0, 
  \quad \mu(u)=-\Gamma\partial_x^2u+\nabla_z\Psi(u).
  % \partial_tu_j= -\partial_x\left[u_j(1-u_j)\,\partial_x\left(\sum_{\ell=1}^nQ_{j\ell}\,\partial_x^3u_\ell\right)\right] + \theta\,\partial_{xx}g_j(u_j).
  % \partial_t \begin{pmatrix}u_1\\u_2\\\vdots\\ u_n\end{pmatrix}&=-\partial_x\left[\begin{pmatrix}
  %     u_1(1-u_1) & & & \\ & u_2(1-u_2) & & \\ & & \ddots & \\ & & & u_n(1-u_n)
  %   \end{pmatrix}Q\partial_{x}^3\begin{pmatrix}u_1\\u_2\\\vdots\\ u_n\end{pmatrix}\right]\\%&\quad+\theta\partial_{x}^2\begin{pmatrix}u_1\\u_2\\\vdots\\ u_n\end{pmatrix},
\end{align}
Here $\Gamma\in\R^{n\times n}$ is a positive definite matrix, 
$\Psi:\tilde S\to\R$ is the homogeneous part of the free energy density,
and $\widetilde{\M}(\cdot):\tilde S\to\R^{n\times n}$ is usually referred to as Onsager matrix.
In the notations above, 
the (total) free energy density would be given by 
\begin{align}
  \label{eq:chfe}
  \tilde f(p,z)=\frac12p^\tT\Gamma p+\Psi(z).  
\end{align}
In \eqref{eq:ch},
a typical choice for the value space $\tilde S\subset\R^n$ is the $(n-1)$-dimensional Gibbs-simplex 
(the entries of $z\in\tilde S$ are non-negative and sum up to one), 
and an adapted Onsager matrix is given by $\widetilde{\M}(z) = \operatorname{diag}(z)-zz^\tT$.
Properties of solutions to \eqref{eq:ch} have been studied by various authors,
and the first rigorous existence result has been given in \cite{EGphysD}.

Although the Cahn-Hilliard systems \eqref{eq:ch} themselves
are in principle amendable to the approach that is carried out below,
we prefer to present our method in the formally more transparent setting
in which the value space is given by the cuboid \eqref{eq:cuboid}, 
and the mobility matrix has the diagonal form \eqref{eq:fullydecoupled}.
The \emph{cross diffusion} between the $n$ species is thus induced
be means of the free energy $\ent$
--- like by choosing $\Gamma$ non-diagonal in \eqref{eq:chfe} above ---
but not by means of  the mobility matrix $\M$.
Structurally, our PDE system \eqref{eq:equation4} lies in between 
the Cahn-Hilliard systems \eqref{eq:ch} 
and the fourth order thin film approximations of multi-layered fluids, see e.g. \cite{MatLau},
which come with a fully decoupled $\M$ containing \emph{linear} mobility functions $\mob_j(z_j)=m_jz_j$.
Analytically, \eqref{eq:equation4} is much closer to \eqref{eq:ch},
since the key difficulty arises from the nonlinear mobility functions in $\M$:
the metric $\W_\M$ is much more difficult to handle than a tensorized $L^2$-Wasserstein distance.

% The focus here is on fully decoupled mobility matrices $\M$ with \emph{nonlinear} mobility functions $\mob_j$ on the diagonal;
% the respective equation \eqref{eq:equation4} with linear mobilities $\mob_j(z_j)=m_jz_j$,
% in which case $\W_\M$ is a tensorized version of the celebrated Wasserstein distance,

Apart from the restriction to fully decoupled mobilities, 
we shall make two further simplifying assumptions,
with the intension 
not to obscure the main conceptual ideas that are involved:
\begin{itemize}
  % \item The components $u_j$ in \eqref{eq:equation4} are coupled via the entropy density $f$
  %   --- like by means of a non-diagonal matrix $Q$ in the example \eqref{eq:ch} above ---
  %   but \emph{not} via the mobility matrix $\M$, which is a fully decoupled, see \eqref{eq:fullydecoupled}.
  %   We expect that the existence result still holds at least for mobility matrices with mildly coupled entries,
  %   but this quite restrictive hypothesis appears to be essential for the specific approach that is carried our here.
\item The entropy density $f$ is supposed to be smooth and strictly convex, see Assumption \ref{def:E} below,
  so unlike in the original Cahn-Hilliard problem, there is no meta-stability involved.
  Consequently, the only stationary solution is a spatially homogeneous state.
  The analytical reason for this restriction is simple:
  since we are working on the whole axis $\R$, 
  a choice like in \eqref{eq:chfe} with $\Psi$ being concave
  leads to a free energy $\ent$ in \eqref{eq:energy} that is unbounded from below.
  On the other hand, we admit densities $f$ that do not necessarily decompose as in \eqref{eq:chfe};
  for instance, the matrix $\Gamma$ is allowed to depend on $z$ in a mild way,
  see the example in Remark \ref{ex:HF} below.  
\item The space dimension is one, which helps to simplify notations significantly. 
  The extension of our results to higher space dimensions would require some more technical effort,
  but no conceptually new ingredients.
\end{itemize}

The proof of existence of weak solutions below 
follows the same main lines as in our recent work \cite{zm2014,lmz2015}.
The backbone is the variational \emph{minimizing movement scheme} for gradient flows \cite{giorgi1983,jko1998,savare2008}: 
For a given step size $\tau>0$, define a sequence $(u_\tau^k)_{k\ge 0}$ recursively by $u_\tau^0:=u^0$,
\begin{align}\label{eq:mms}
  u_\tau^k:=\argmin_{u\in\measm}\left(\frac1{2\tau}\W_\M(u,u_\tau^{k-1})^2+\ent(u)\right)\qquad\text{for }k\in\N.
\end{align}
We show that the piecewise constant (in time) interpolants $u_\tau$ along the sequence $(u_\tau^k)_{k\in\N}$ from \eqref{eq:mms} 
converge in a suitable space as $\tau\searrow 0$ to a limit map $u$ 
which is a weak solution to system \eqref{eq:equation4} in the sense specified in Definition \ref{def:weaksol} below. 
The presented strategy of proof counts by now as ``classical'' in the context of gradient flows in the $L^2$-Wasserstein distance: 
after the seminal contributions by Jordan, Kinderlehrer and Otto \cite{jko1998} and Otto \cite{otto2001} on second order equations, 
it has also been employed e.g. for fourth-order equations \cite{giacomelli2001,gianazza2009,matthes2009,lmz2015},
equations of fractional order \cite{liso}, 
and systems \cite{laurencot2011,bcc2012,blanchet2012,zinsl2012,blanchet2014,kmx2015,zinsl2015}. 
For the distances with nonlinear mobility, existence results have been derived by this method 
e.g. in \cite{lisini2012,zinsl2016,zinsl2016b} for the scalar case.
% (for results in abstract metric spaces, see the monograph \cite{savare2008} or the recent articles \cite{ferreira2015,plazotta2016} which extend the theory to the nonautonomous framework). 

% Concerning alternative current approaches to genuine cross-diffusion systems of the form \eqref{eq:pdesystem},
% we mention the recent existence results from \cite{jz2015,jz2016}, which use the boundedness-by-entropy principle from \cite{juengel2014}.
% In comparison to these and classical results (see, e.g., \cite{lady1968,amann}), 
% we obtain solutions of lower regularity, but can allow for more general initial data.

As a closing remark in favor of the variational approach presented here
in comparison to the original existence proof for \eqref{eq:ch} from \cite{EGphysD}
(or other ``classical'' approaches on the basis of \cite{lady1968,amann} etc.),
we stress that once the gradient flow structure of \eqref{eq:equation4} has been properly identified, 
and the details for the metric $\W_\M$ have been worked out \cite{zm2014},
our construction of weak solutions follows a pre-defined route that works in a rather general setting.
In particular,
there is no need to introduce artificial regularizations of the degeneracies in \eqref{eq:equation4},
the natural hypotheses on the initial datum are obtained from finiteness of the initial free energy,
and --- most importantly --- the key a priori estimate follows naturally 
from the energy dissipation along the heat flow.

\section{Notations, hypotheses and results}
\subsection{Notations}
We recall that $S^\ell,S^r\in\R^n$ are two given vectors with $S^\ell_j<S^r_r$ for all $j\in\{1,\ldots,n\}$ 
that define the value space $S$ as the corresponding $n$-cuboid \eqref{eq:cuboid}. 
As in \cite{zm2014}, we distinguish two qualitatively different cases by introducing a \emph{reference state} $\zref\in S$ 
relatively to which certain quantities (e.g. the mass of an element in $\measm$) are measured:
\begin{enumerate}[{Case }\upshape (A){:}]
\item Reference state $\zref=S^\ell$.
\item Reference state $\zref\in\inn{S}$.
\end{enumerate}
The respective case will always be indicated with (A) and/or (B) if necessary. 
Note that in case (A), the function $u-\zref$ is nonnegative for each $u\in\measm$.

The space of measurable $S$-valued functions $u:\R\to S$ is denoted by $\measm$.
For each $u\in\measm$ and a given reference point $\zref\in S$, 
we introduce the quantities $\mss{u-\zref}\in\R^n$ and $\mom{u-\zref}\in\R^n$ by
\[ 
\left[\mss{u-\zref}\right]_j=\int_\R\big(u_j(x)-\zref_j\big)\dd x, \quad 
\left[\mom{u-\zref}\right]_j=\int_\R x^2\big(u_j(x)-\zref_j\big)\dd x. 
\]
We write $0<\mss{u-\zref}<\infty$ to say that all components are positive and finite,
and we abbreviate $|\mom{u-\zref}|=\sum_{j=1}^n[\mom{u-\zref}]_j$.
On the subspaces $L^2(\R;\R^n)$ and $H^1(\R;\R^n)$ of $\measm$,
we use the following norms:
\begin{align*}
  \|u-\zref\|_{L^2} &= \left(\sum_{j=1}^n\int_\R\big(u_j(x)-\zref_j\big)^2\dd x\right)^{\frac12}, \\
  \|u-\zref\|_{H^1} &= \left(\|u-\zref\|_{L^2}^2+\|\partial_x(u-\zref)\|_{L^2}^2\right)^{\frac12}.
\end{align*}
Now define, in the respective case (A) or (B), the \emph{auxiliary space} $\Xaux\subset\measm$ as follows.
\begin{enumerate}[\upshape (A)]
\item Let $u^0\in\measm$ be the initial datum with $0<\mss{u^0-\zref}<\infty$.
  Set
  \begin{align*}
    \Xaux :=\{u\in\measm:&~\mss{u-\zref}=\mss{u^0-\zref},\\&|\mom{u-\zref}|<\infty,\,\|\partial_x u\|_{L^2}<\infty\}.      
  \end{align*}
\item Here, simply put 
  $\Xaux :=\{u\in\measm:\,\|u-\zref\|_{H^1}<\infty\}$.
\end{enumerate}

\subsection{Hypotheses on the mobility matrix}
For the mobility matrix $\M$, 
we assume that it is \emph{induced} by a function $h:S\to\R$ of the form $h(z)=\sum_{j=1}^n h_j(z_j)$,
that is $(\nabla_z^2 h(z))^{-1}=\M(z)$, or simply $\mob_j=\frac1{h_j''}$, for $z\in\inn{S}$. 
We additionally require that for all $j=1,\ldots,n$, the following holds (compare to \cite{zm2014}):
\begin{enumerate}[(H1)]
\item $h_j$ is $\alpha$-H\"older continuous on $[S^\ell_j,S^r_j]$ for each $\alpha<1$, and smooth on $(S^\ell_j,S^r_j)$;
\item $h_j$ is strictly convex and non-positive with $h_j(S^\ell_j)=h_j(S^r_j)=0$;
  % \item $\lim\limits_{s\searrow S^\ell_j}h_j''(s)=+\infty=\lim\limits_{s\nearrow S^r_j}h_j''(s)$;
\item $\frac1{h_j''}\upharpoonright_{(S^\ell_j,S^r_j)}$ extends continuously to 
  a concave function $\mob_j\in C^2([S^\ell_j,S^r_j])$ with $\mob_j(S^\ell_j)=\mob_j(S^r_j)=0$.
\end{enumerate}
These conditions are rather restrictive: 
essentially, they imply that, as $s\searrow S^\ell_j$ and $z_j\nearrow S^r_j$, respectively;
$h_j$ behaves like a multiple of $(z_j-S^\ell_j)\log(z_j-S^\ell_j)$ and of $(S^r_j-z_j)\log(S^r_j-z_j)$,
and accordingly $\mob_j$ behaves like a multiple of $z_j-S^\ell_j$ and of $S^r_j-z_j$;
see Remark \ref{ex:HF} below.

With $h$ satisfying the conditions above, 
we introduce the associated \emph{heat entropy} functional, 
that will be the source for our key a priori estimate on solutions to \eqref{eq:equation4}.
\begin{definition}[Heat entropy]\label{def:H}
  Let $S$ and $\zref$ as above, and assume that $h$ satisfies (H1)--(H3). 
  Define the \emph{heat entropy} functional $\calH:\measm\to\R_\infty$ by
  \begin{align*}
    \calH(u)=\int_\R h_{\zref}(u)\dd x,
  \end{align*}
  where, depending on the cases (A) or (B),
  \begin{enumerate}[\upshape (A)]
  \item $h_\zref(z):=h(z)$;
  \item $h_\zref(z):=h(z)-h(\zref)-\nabla_z h(\zref)^\tT(z-\zref)$.
  \end{enumerate}
\end{definition}
Note that $h_\zref$ is nonpositive in case (A) and nonnegative in case (B).
Further, in both cases, $h_\zref(\zref)=0$ and $h_\zref$ is strictly convex with $\hess h_\zref=\hess h$.

\subsection{Hypotheses on the free energy}
For the free energy functional $\ent$, we assume the following.
\begin{assumption}[Free energy]\label{def:E}
  We assume that a smooth function $f:\R^n\times S\to\R$ with the following properties is given:
  \begin{enumerate}[(i)]
  \item \emph{Normalization:}
    $f(0,\zref)=0$ and $\nabla_pf(0,\zref)=\nabla_zf(0,\zref)=0$
  \item \emph{Growth condition:}
    there exist $\bar C_f>0$ and $\underline{C}_f>0$ such that for all $p\in\R^n$, $z\in S$ and $(\pi,\zeta)\in\R^n\times\R^n$, 
    one has --- in the respective case (A) or (B) ---
    \begin{enumerate}[(A)]
    \item $\quad\underline{C}_f |\pi|^2\le \begin{pmatrix}\pi\\\zeta\end{pmatrix}^\tT\nabla^2_{(p,z)}f(p,z)\begin{pmatrix}\pi\\\zeta\end{pmatrix}\le \bar C_f(|\pi|^2+|\zeta|^2)$;
    \item $\quad\underline{C}_f (|\pi|^2+|\zeta|^2)\le \begin{pmatrix}\pi\\\zeta\end{pmatrix}^\tT\nabla^2_{(p,z)}f(p,z)\begin{pmatrix}\pi\\\zeta\end{pmatrix}\le \bar C_f(|\pi|^2+|\zeta|^2)$.
    \end{enumerate}
  \end{enumerate}
  The \emph{free energy} functional $\ent:\measm\to\R_\infty$ is then defined by
  \begin{align*}
    \ent(u)=\begin{cases}
      \int_\R f(\partial_x,u)\dd x&\text{if }u \in \Xaux, \\ 
      +\infty&\text{otherwise.}
    \end{cases}
  \end{align*}
\end{assumption}
The assumption of normalization has been made for convenience,
in order to avoid additional terms inside the integral representation of $\ent$.
Note that any smooth free energy density $g:\R^n\times S\to\R$ satisfying the growth conditions
gives rise to a normalized $f$ that satisfies the growth conditions 
with the same constants $\bar C_f>0$ and $\underline{C}_f>0$
via
\[ f(p,z) := g(p,z) - \big[g(0,\zref) + p^\tT\nabla_pg(0,\zref) + (z-\zref)^\tT\nabla_zg(0,\zref)\big]. \]
\begin{remark}[Examples]\label{ex:HF}
  \begin{enumerate}[(a)]
  \item The paradigmatic example for $h$ satisfying \break \textup{(H1)--(H3)} is given by
    \begin{align*}
      h_j(s)=\begin{cases}
        (s-S^\ell_j)\log(s-S^\ell_j)+(S^r_j-s)\log\left(S^r_j-s\right)&\\
        \quad-(S^r_j-S^\ell_j)\log(S^r_j-S^\ell_j)&\text{if }s\in(S^\ell_j,S^r_j),\\ 
        0&\text{if }s\in \{S^\ell_j,S^r_j\},
      \end{cases}
    \end{align*}
    and yields
    \begin{align*}
      \mob_j(s)=\frac1{S^r_j-S^\ell_j}(s-S^\ell_j)(S^r_j-s),
    \end{align*}
    for each $j\in\{1,\ldots,n\}$.
  \item The Cahn-Hilliard functional given in \eqref{eq:chfe} is admissible,
    provided that the homogeneous free energy density $\Psi$ is uniformly convex, 
    i.e., $\nabla_z^2\Psi\ge\lambda$ for some $\lambda>0$.
    An admissible generalization of that example is
    \begin{align*}
      f(p,z)=\frac12 \big(\varepsilon+a(z)\big)\,p^\tT\Gamma p+\Psi(z),
    \end{align*}
    where $\Gamma\in\Matn$ is still a symmetric positive definite matrix,
    $\varepsilon>0$ is a constant,
    $\Psi:S\to\Rp$ is smooth and uniformly convex, jointly in the $n$ coordinates of $z$,
    and $a:S\to\Rp$ is such that
    \[ a(z)\dff^2a(z)\ge\dff a(z)^\tT\dff a(z) \]
    for all $z\in S$.
    For instance, one may choose
    \[ a(z) = \sum_{k=1}^n e^{\mu_kz_k} \]
    with arbitrary exponential weights $\mu_1,\ldots,\mu_n\in\R$.
  \end{enumerate}
\end{remark}

\subsection{Summary of results}
Our notion of weak solution is as follows:
\begin{definition}[Weak solution]\label{def:weaksol}
  A map $u:\R_{\ge 0}\times\R\to S$ is called \emph{weak solution} to \eqref{eq:equation4} 
  % with the initial condition $u^0$
  if the following holds:
  \begin{enumerate}[(a)]
  \item $u(t,\cdot)\in\Xaux$ for all $t>0$;
  \item $u-\zref\in L^2_\loc(\R_{\ge 0};H^2(\R;\R^n))$;
    % \item $u(t,\cdot)$ converges narrowly to $u^0$ as $t\searrow0$;
    % $u(0,\cdot)=u^0$ almost everywhere on $\R$;
  \item $u$ satisfies the following integral formulation of \eqref{eq:equation4},
    \begin{align}\label{eq:cweak}
      \begin{split}
        &\int_0^\infty \int_\R \big(\partial_t\varphi^\tT u+\nonl(u)[\varphi]\big)\dd x\dd t=0,\\
        &\text{with}\quad
        \nonl(u)[\varphi]=\partial_x\big(\M(u)\partial_x\varphi\big)^\tT\,
        \left[\nabla_{z}f(\partial_x u, u) -\partial_x\nabla_{p}f(\partial_x u, u)\right],
      \end{split}
    \end{align}
    for all $\varphi\in C^\infty_c(\R_{>0}\times\R;\R^n)$.
  \end{enumerate}
\end{definition}
The existence result that we prove is this:
\begin{thm}[Existence of weak solutions to \eqref{eq:equation4}]
  \label{thm:exist4}
  Assume that $h$, $\Xaux$ and $f$ are subject to the conditions mentioned above. 
  Additionally, suppose that $ u^0\in\measm$ and either, depending on the case,
  \begin{enumerate}
  \item[(A)] $0<\mss{u^0-\zref}<\infty$, $|\mom{ u^0-\zref}|<\infty$, and $\partial_x u^0\in L^2(\R;\R^n)$,
    or
  \item[(B)] $u^0-\zref\in H^1(\R;\R^n)$.
  \end{enumerate}
  Then, there exists a weak solution $u:\R_{\ge 0}\times\R\to S$ with the additional properties
  \begin{align}
    \label{eq:u0}
    &u(0,\cdot)=u^0;\\
    \label{eq:Wcts}
    &u\in C^{1/2}(\R_{\ge 0};(\measm,\W_\M));\\
    \nonumber
    &u-\zref\in L^\infty(\R_{\ge 0};H^1(\R;\R^n));
  \end{align}
  and, in the case (A), for all $t>0$:
  \begin{align}
    \label{eq:exaddon}
    &\mss{u(t)-\zref}=\mss{u^0-\zref};\\
    &|\mom{ u(t)-\zref}|<\infty.
  \end{align}
\end{thm}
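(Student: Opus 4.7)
The plan is to follow the classical JKO strategy already sketched in the introduction, and to add on top of it a flow interchange estimate against the heat entropy $\calH$ in order to obtain the extra regularity needed to pass to the weak formulation \eqref{eq:cweak}.

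First I would set up the discrete scheme. For fixed $\tau>0$, I would verify that the minimization problem \eqref{eq:mms} admits at least one minimizer $u_\tau^k\in\Xaux$ at each step: the direct method applies thanks to the lower semicontinuity of both $\ent$ and $\W_\M$ (the latter is a metric by the results recalled from \cite{zm2014}), together with the coercivity of $\ent$ built into Assumption \ref{def:E}, which, combined with the moment/mass control in the definition of $\Xaux$, forbids minimizing sequences from escaping. The standard energy-dissipation estimate
\begin{equation*}
\ent(u_\tau^k)+\frac{1}{2\tau}\W_\M(u_\tau^k,u_\tau^{k-1})^2\le \ent(u_\tau^{k-1})
\end{equation*}
then yields (i) a uniform bound on $\ent(u_\tau^k)$, which via the growth condition translates into a uniform $H^1$-bound on $u_\tau^k-\zref$; (ii) a telescoping bound $\sum_k\W_\M(u_\tau^k,u_\tau^{k-1})^2\le 2\tau\,\ent(u^0)$, which gives the $1/2$-Hölder continuity in $\W_\M$ needed for \eqref{eq:Wcts}; and (iii) in case (A), mass conservation and propagation of a finite second moment, implying \eqref{eq:exaddon} at the discrete level. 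Denote by $u_\tau$ the piecewise constant interpolant.

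Next, and this is where the heat entropy from Definition \ref{def:H} enters, I would perform a flow interchange in the sense of Matthes--McCann--Savaré: evaluate the gradient flow of $\calH$ in $(\measm,\W_\M)$ (essentially, the standard heat flow in each component) starting from $u_\tau^k$ and compare the decrease of $\calH$ along this auxiliary flow with the decrease of $\ent$. Using $(\nabla_z^2 h)^{-1}=\M$, the general flow-interchange inequality gives
\begin{equation*}
\ent(u_\tau^k)-\ent(u_\tau^{k-1})\le -\tau\,\frac{d}{ds}\bigg|_{s=0^+}\ent\bigl(\ST_s\calH u_\tau^k\bigr),
\end{equation*}
where $\ST^\calH$ is the heat semigroup. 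A direct computation of the right-hand side, using the chain rule for $\ent$ and the identity satisfied by $\M$ and $h$, produces a quadratic form in $(\partial_x u_\tau^k,\partial_x^2 u_\tau^k)$ with integrand controlled from below by $\nabla^2_{(p,z)}f$. Thanks to the coercivity constant $\underline{C}_f>0$ in Assumption \ref{def:E}, after summing in $k$ and using the total dissipation of $\calH$, one obtains the crucial uniform bound
\begin{equation*}
\int_0^T\|\partial_x^2 u_\tau(t)\|_{L^2}^2\dd t\le C(T,\ent(u^0),\calH(u^0)),
\end{equation*}
so that $u_\tau-\zref$ is bounded in $L^2_\loc(\R_{\ge 0};H^2(\R;\R^n))$.

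With these estimates I would now extract a limit. The uniform $W_\M$-Hölder continuity in time together with the $H^1$-bound in space and a refined Ascoli--Arzelà argument (applied locally in $x$, then exhausting $\R$ using moment/$H^1$-control at spatial infinity) produces a subsequence $u_\tau\to u$ pointwise almost everywhere and strongly in $L^2_\loc$, with $\partial_x u_\tau\to\partial_x u$ strongly in $L^2_\loc$ thanks to the $H^2$-bound. The limit $u$ inherits the regularity (a)--(b) of Definition \ref{def:weaksol}, the continuity \eqref{eq:Wcts}, the $L^\infty H^1$ bound, the initial datum \eqref{eq:u0}, and in case (A) the mass and moment properties \eqref{eq:exaddon}. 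To derive \eqref{eq:cweak}, I would test the minimality of $u_\tau^k$ by first-order perturbations along smooth vector fields $\varphi$, following the standard Otto-type differentiation of $\W_\M^2$ worked out in \cite{zm2014}: this yields a discrete Euler--Lagrange identity whose two nonlinear terms involve $\nabla_z f(\partial_x u_\tau,u_\tau)$ and $\partial_x\nabla_p f(\partial_x u_\tau,u_\tau)$, both of which pass to the limit thanks to the strong convergence of $u_\tau$ and $\partial_x u_\tau$ and the smoothness of $f$; the linear term reproduces $\int\partial_t\varphi^\tT u$ after a discrete integration by parts in time.

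The hard part will be the flow interchange step: the precise computation of the dissipation of $\ent$ along the $\calH$-gradient flow requires carefully exploiting the algebraic identity $\mob_j=1/h_j''$ and the gradient-dependent nature of $f$, so that the resulting integrand can indeed be bounded below by the full Hessian $\nabla^2_{(p,z)}f$ and not merely by its $pp$-block. The boundary behavior of $\mob_j$ at $S^\ell_j$ and $S^r_j$ (where $h_j''$ blows up) also needs careful handling, most likely via a density/regularization argument at the level of the heat-flow variation, which is legitimate because of the smoothness assumption on $\mob_j$ from (H3). Once this estimate is secured, compactness and the limit passage proceed along well-trodden lines.
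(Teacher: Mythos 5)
The overall scaffold of your proposal matches the paper: minimizing movement scheme, classical energy estimates, flow interchange against the heat entropy $\calH$ to obtain the uniform $L^2_\loc(H^2)$ bound, then a compactness argument and passage to the limit. However, there is a genuine gap at the step where you derive the weak formulation.

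You propose to ``test the minimality of $u_\tau^k$ by first-order perturbations along smooth vector fields, following the standard Otto-type differentiation of $\W_\M^2$''. The paper does not do this, and for good reason: for modified Wasserstein metrics built from \emph{nonlinear} mobilities $\mob_j$, a clean differential characterization of $\W_\M^2$ under push-forward perturbations (the analogue of the Otto calculus $(\mathrm{id}+\varepsilon\xi)_\#\mu$) is not available in the cited literature, because there is no Brenier map and the geodesic structure is much more rigid. Indeed, the introduction explicitly advertises that the whole construction is designed to avoid exactly this. Instead of a first variation of the JKO functional, the paper derives the discrete weak formulation (Lemma \ref{lemma:dweak}) through a \emph{second} flow interchange, this time against the $\lambda$-convex regularized potential energy $\mathcal{V}(u)=\alpha\calH(u)+\int_\R\rho^\tT u\dd x$ from Theorem \ref{thm:HV}(b). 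The $\lambda$-flow of $\mathcal{V}$ is the viscous nonlinear transport equation \eqref{eq:viscous}, and computing the dissipation of $\ent$ along that flow is what produces the nonlinear operator $\nonl$ and hence \eqref{eq:dweak}; the regularization parameter $\alpha$ is then sent to zero together with $\tau$ in the limit passage. This flow-interchange-based Euler--Lagrange derivation is the central new idea of the existence proof, and without it your argument has no rigorous way to read off the PDE from the minimization.

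Two smaller points of confusion. First, the identity $\mob_j=1/h_j''$ does not enter the dissipation computation of $\ent$ along the $\calH$-flow; it is only needed to identify the $\W_\M$-gradient flow of $\calH$ with the ordinary heat flow $\partial_s u_s=\partial_x^2u_s$. Once that identification is in hand, the dissipation
\[
-\frac{\dd}{\dd s}\ent(u_s)=\int_\R{\partial_x^2 u_s\choose\partial_x u_s}^\tT\nabla^2_{(p,z)}f(\partial_x u_s,u_s){\partial_x^2 u_s\choose\partial_x u_s}\dd x
\]
follows by an elementary integration by parts, so the lower bound by the full Hessian (not just its $pp$-block) is not the delicate point you suspect; it is immediate from Assumption \ref{def:E}(ii). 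Second, the boundary degeneracy of $\mob_j$ is likewise harmless in this step since the heat flow is instantly smoothing. Where delicacy genuinely arises in the paper is in passing $s\searrow0$ in the dissipation along the $\mathcal{V}$-flow in Lemma \ref{lemma:dweak}, which requires establishing $H^1$-continuity and an a priori $H^2$-bound of the auxiliary flow — considerations that do not appear in your plan because the corresponding step is missing.
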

We emphasize that the initial condition \eqref{eq:u0} only makes sense 
in combination with the continuity from \eqref{eq:Wcts}.
In particular, $u(t,\cdot)$ converges to $u^0$ weakly$\ast$ as $t\searrow0$.

There is an additional a priori estimate that follows from our construction, 
and that one implies some information on the long time asymptotics of the corresponding weak solutions.
We summarize this:
\begin{thm}[Long time asymptotics]
  \label{thm:long}
  The solutions whose existence has been discussed in Theorem \ref{thm:exist4} above have the following additional properties:
  \begin{itemize}
  \item In case (A): 
    for each $p<\frac4{15}$, there is a $K_p>0$ such that
    \begin{align}
      \label{eq:decay1}
      % \|u(t)-\zref\|_{H^1} 
      \ent(u(T))\le K_pT^{-p} \quad \text{for all $T\ge1$}.
    \end{align}
    In particular, $u-\zref$ converges to zero in $H^1(\R;\R^n)$ as $t\to\infty$.
  \item Given $\delta>0$, there exists 
    \begin{itemize}
    \item in case (A), for each $p>\frac13$ a set $\Theta\subset\R_{>0}$ 
      such that (denoting the Lebesgue measure on $\R$ by $\mathcal{L}$)
      \[\lebm{\Theta\cap\{t\le T\}}\le\delta T^p \quad\text{for every $T>0$};\]
    \item in case (B), a set $\Theta\subset\R_{>0}$ with $\lebm{\Theta}\le\delta$,
    \end{itemize}
    such that $\|\partial_x^2u(t)\|_{L^2}$ converges to zero,
    and $u(t)$ converges uniformly to the constant $\zref$ as $t\to\infty$ with $t\in\R_{>0}\setminus\Theta$.
  \end{itemize}
\end{thm}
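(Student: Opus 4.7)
The plan is to combine the energy-dissipation inequality inherited from the minimizing movement scheme \eqref{eq:mms} with a flow interchange estimate against the heat entropy $\calH$ of Definition~\ref{def:H}, and then to convert these a priori bounds into pointwise-in-time decay through interpolation with the second moment. From the construction used in Theorem~\ref{thm:exist4}, $t\mapsto\ent(u(t))$ is nonincreasing and the squared metric slope of $\ent$ is integrable on $\R_{>0}$. The key new ingredient is the flow interchange: since by \cite{zm2014} the heat entropy $\calH$ generates the componentwise linear heat equation in $(\measm,\W_\M)$ as an EVI-gradient flow, inserting the heat-flow trajectory $v=\heat_s(u_\tau^k)$ as a competitor in the Euler-Lagrange inequality for \eqref{eq:mms} and passing to $s\searrow 0^+$ yields
\begin{align*}
 -\frac{d}{ds}\bigg|_{s=0^+}\ent(\heat_s(u_\tau^k))\le\frac{1}{\tau}\bigl(\calH(u_\tau^{k-1})-\calH(u_\tau^k)\bigr).
\end{align*}
Computing the left-hand side via Assumption~\ref{def:E} and absorbing the cubic cross term $\int|\partial_x u|^2|\partial_x^2 u|\,\dd x$ by a 1D Gagliardo-Nirenberg inequality, then summing in $k$ and passing to $\tau\searrow 0$, produces an $H^2$-type estimate on $u$, with right-hand side controlled by $|\calH(u^0)|$, $\ent(u^0)$, and a prefactor that is at most polynomial in $T$.

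For the decay in case~(A), I invoke the $\tfrac12$-Hölder continuity \eqref{eq:Wcts} together with the moment-growth estimate (derivable from the action-integral representation of $\W_\M$) to obtain $|\mom{u(T)-\zref}|\le C(1+T)$. Chaining this growing second moment with the Nash inequality $\|u-\zref\|_{L^2}^2\le CM^{4/3}\|\partial_x u\|_{L^2}^{2/3}$ for the non-negative function $u-\zref$ of conserved mass $M=\mss{u^0-\zref}$, and with the 1D Gagliardo-Nirenberg inequality $\|\partial_x u\|_{L^2}^2\le C\|u-\zref\|_{L^2}^{4/3}\|\partial_x^2 u\|_{L^2}^{2/3}$, produces an interpolation of the form $\ent(u(t))^{\alpha}\le C(1+t)^{\beta}\|\partial_x^2 u(t)\|_{L^2}^2$ with explicit $\alpha>1$ and $\beta>0$. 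Integrating the dissipation identity $-\dot\ent\ge c\|\partial_x^2 u\|_{L^2}^2$ (up to the lower-order terms already controlled in the flow-interchange bound) against this interpolation, together with monotonicity of $\ent$, then yields an ODE-type inequality whose integration delivers $\ent(u(T))\le K_pT^{-p}$ for every $p<\tfrac{4}{15}$. The $H^1$-convergence $u-\zref\to 0$ in case~(A) follows from this decay combined with the Nash bound applied to $u(T)-\zref$.

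For the exceptional set $\Theta$, I apply Chebyshev's inequality to the $H^2$-estimate of the first paragraph. In case~(B), using that $\ent$ controls $\|u-\zref\|_{H^1}^2$ and that $\M$ is nondegenerate near $\zref$, the flow-interchange estimate can be refined to $\|\partial_x^2 u\|_{L^2}^2\in L^1(\R_{>0})$, so $\Theta=\{t:\|\partial_x^2 u(t)\|_{L^2}>\lambda(t)\}$ with a suitable summable weight $\lambda$ has $\lebm{\Theta}\le\delta$ and $\|\partial_x^2 u(t)\|_{L^2}\to 0$ on its complement. In case~(A) the $H^2$-estimate grows linearly in $T$; a rescaled Chebyshev argument at threshold $\sim t^{p-1}$ with $p>\tfrac13$ produces $\Theta$ with $\lebm{\Theta\cap[0,T]}\le\delta T^p$. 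The uniform convergence $u(t)\to\zref$ in $L^\infty$ on $\R_{>0}\setminus\Theta$ follows from the Sobolev embedding $H^1(\R)\hookrightarrow L^\infty(\R)$ together with the vanishing of $\|u(t)-\zref\|_{H^1}$ along that set. The most delicate technical point is the exponent bookkeeping in case~(A): one has to track how the degeneracy of $\M$ near $\partial S$ weakens the pointwise dissipation bound, how the linearly growing second moment enters the Nash/Gagliardo-Nirenberg chain, and verify that the resulting threshold is precisely $\tfrac{4}{15}$.
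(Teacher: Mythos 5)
Your overall architecture is in the right spirit and shares the paper's key ingredients: the flow interchange with $\calH$ produces an $L^2_tH^2_x$-type estimate with right-hand side controlled by $\calH(u^0)$ and the (linearly growing) second moment; interpolation with conserved quantities converts this into a pointwise statement; and a Chebyshev/measure-theoretic argument handles the exceptional set (this is exactly the paper's Lemma~\ref{lem:measuretheory}).

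However, there is a genuine gap in your case (A) decay argument. You base it on a dissipation inequality $-\dot\ent\ge c\|\partial_x^2u\|_{L^2}^2$ and then integrate an ODE of the form $-\dot\ent\gtrsim \ent^{\alpha}/(1+t)^{\beta}$. That inequality is not correct: the dissipation of $\ent$ along its own $\W_\M$-gradient flow is, formally,
\begin{align*}
  -\frac{\dd}{\dd t}\ent(u(t))=\int_\R \big(\partial_x[\nabla_zf-\partial_x\nabla_pf]\big)^\tT\M(u)\,\partial_x[\nabla_zf-\partial_x\nabla_pf]\dd x,
\end{align*}
a \emph{third-order}, degenerate quantity which cannot be bounded below by $\|\partial_x^2u\|_{L^2}^2$. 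The quantity $\underline{C}_f\|\partial_x^2u\|_{L^2}^2$ instead arises as the \emph{cross}-dissipation $\dff^{\calH}\ent$ in the flow interchange (Proposition~\ref{prop:minmovds4}), which after telescoping bounds the \emph{drop of $\calH$} along the discrete solution, not the drop of $\ent$. Combined with the lower bound $\calH\gtrsim -(|\mss{u-\zref}|+|\mom{u-\zref}|^\alpha)$ and the linear growth of the second moment, this yields only the time-integrated bound $\int_0^T\|\partial_x^2u_\tau\|_{L^2}^2\dd t\le C_q(1+T^q)$ of Lemma~\ref{lem:H2}; there is no pointwise differential inequality for $\ent$ to integrate. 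The paper therefore argues differently: from the integrated bound it picks by a mean-value/pigeonhole argument a single time $t_\tau^T\le T$ with $\|\partial_x^2u_\tau(t_\tau^T)\|_{L^2}^2\lesssim T^{-(1-q)}$, interpolates $\|u_\tau(t_\tau^T)-\zref\|_{H^1}$ against $\|\partial_x^2u_\tau(t_\tau^T)\|_{L^2}$ and the \emph{conserved mass} $\mss{u^0-\zref}$ (Lemma~\ref{lem:gagliardo}, not the growing second moment, which enters only through the prefactor $C_q(1+T^q)$), bounds $\ent(u_\tau(t_\tau^T))$ by Proposition~\ref{prop:propE}(a), and finally uses the monotonicity of $t\mapsto\ent(u_\tau(t))$ to propagate the bound to the terminal time $T$. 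A secondary point: the ``cubic cross term'' you propose to absorb by Gagliardo--Nirenberg does not occur; Assumption~\ref{def:E} makes the dissipation exactly the Hessian quadratic form $\int_\R(\partial_x^2u,\partial_xu)^\tT\nabla^2_{(p,z)}f\,(\partial_x^2u,\partial_xu)\dd x\ge\underline{C}_f\|\partial_x^2u\|_{L^2}^2$, with no lower-order remainder to absorb.
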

These results are probably much weaker than one would expect for the behavior
of the gradient flow for a strictly convex functional like $\ent$.
Note, however, that the uniform convexity of $\ent$ on the flat space $H^1(\R;\R^n)$ is ``not seen'' by the metric $\W_\M$.
Indeed, geodesic $\lambda$-convexity --- even with negative $\lambda$ --- with respect to $\W_\M$
is apparently an extremely rare property for functionals to have \cite{zm2014}.
Therefore, it seems unlikely that significantly better estimates on the long time asymptotics can be derived
without additional information on the solution that go beyond its characterization as gradient flow.

\subsection{Outline of the further paper}
The plan of the paper is the following: 
first, we begin with a summary of the relevant properties of the distance $\W_\M$ and the associated heat entropy $\calH$. 
Then, we study the minimization problem \eqref{eq:mms} and its solutions $u_\tau^k$, 
deriving an additional regularity estimate with the so-called flow interchange technique. 
The same technique is also used to derive an approximate time-discrete weak formulation satisfied by the piecewise-constant interpolants $u_\tau$. 
The proof of Theorem \ref{thm:exist4} is completed by passing to the continuous-time limit $\tau\searrow 0$.
Finally, Theorem \ref{thm:long} follows as a combination of several estimates that have been derived in the course of the existence proof.

\section{Some analytical preliminaries}
In this section, we collect some results on the distance $\W_\M$ and the associated heat entropy $\calH$ which we frequently make use of later. 
We begin with some general theory.
\subsection{Abstract gradient flows}
A sequence $(u_k)_{k\in\N}$ of measurable functions $u_k:\R\to A$ for some closed set $A\subset \R^n$ is said to \emph{converge weakly$\ast$} to some limit map $u:\R\to A$, if for all $\rho\in C^0_c(\R;\R^n)$, one has
\begin{align*}
\lim_{k\to\infty}\int_\R \rho(x)^\tT u_k(x)\dd x&=\int_\R \rho(x)^\tT u(x)\dd x.
\end{align*}
We will use the following notion of gradient flow from \cite{daneri2008,savare2008}:
\begin{definition}[$\kappa$-flows]
Let $\auxil:\X\to\R_\infty$ be a proper and lower semicontinuous functional on the (pseudo-)metric space $(\X,\metr)$ and let $\kappa\in\R$. A continuous semigroup $\flow^{\auxil}$ on $(\X,\metr)$ satisfying
the \emph{evolution variational estimate}
 \begin{align*}
   \frac{1}{2}\frac{\dd^+}{\dd s}\metr^2(\flow_{s}^{\auxil}(w),\tilde w)+\frac{\kappa}{2}\metr^2(\flow_{s}^{\auxil}(w),\tilde w)+\auxil(\flow_{s}^{\auxil}(w))
   &\le\auxil(\tilde w)
\end{align*}
for arbitrary $w,\tilde w\in\operatorname{Dom}(\mathcal{A})$ and for all $s\ge 0$, as well as the monotonicity condition
\begin{align*}
\auxil(\flow_t^\auxil(w))\le \auxil(\flow_s^\auxil(w))\quad\forall 0\le s\le t
\end{align*}
for all $w\in\X$,
is called \emph{$\kappa$-flow} or \emph{gradient flow} of $\auxil$.
\end{definition}
\begin{remark}
  The concept of $\kappa$-contractive flows is closely related to that of geodesic $\kappa$-convexity.
  Indeed, if a functional $\auxil:\X\to\R_\infty$ induces a $\kappa$-flow, it also is $\kappa$-convex along geodesics \cite{daneri2008,liero2013}.
  The converse is more subtle, and typically requires additional property of $\auxil$.
  In the context of the $L^2$-Wasserstein metric, this has been exhaustively discussed, see e.g. \cite{mccann1997,otto2005}.
\end{remark}
The above-mentioned notion of contractive gradient flow is useful for the derivation of a priori estimates via entropy dissipation. 
Specifically, we use the \emph{flow interchange technique} from \cite{matthes2009}:
\begin{thm}[Flow interchange lemma {\cite[Thm. 3.2]{matthes2009}}]\label{thm:flowinterchange}
Let $\mathcal{B}$ be a proper, lower semicontinuous and geodesically $\lambda$-convex functional on $(\X,\metr)$ and assume that there exists a $\lambda$-flow $\flow^\mathcal{B}$. Let furthermore $\mathcal{A}$ be another proper, lower semicontinuous functional on $(\X,\metr)$ such that $\operatorname{Dom}(\mathcal{A})\subset \operatorname{Dom}(\mathcal{B})$. Assume that, for arbitrary $\tau>0$ and $\tilde w\in \X$, the Yosida penalized functional $w\mapsto \frac1{2\tau}\metr^2(w,\tilde w)+\auxil(w)$ possesses a minimizer $w^*$ on $\X$.

Then, the following holds:
\begin{align*}
\mathcal{B}(w^*)+\tau \mathrm{D}^\mathcal{B}\mathcal{A}(w^*)+\frac{\lambda}{2}\metr^2(w^*,\tilde w)&\le \mathcal{B}(\tilde w).
\end{align*}
There, $\mathrm{D}^\mathcal{B}\mathcal{A}(w)$ denotes the \emph{dissipation} of the functional $\mathcal{A}$ along the $\lambda$-flow $\flow^{\mathcal{B}}$ of the functional $\mathcal{B}$ with starting point $w$, i.e.
\begin{align*}
\mathrm{D}^\mathcal{B}\mathcal{A}(w):=\limsup_{s\searrow 0}\frac{\mathcal{A}(w)-\mathcal{A}(\flow_{s}^{\mathcal{B}}(w)    )   }{s}.
\end{align*}
\end{thm}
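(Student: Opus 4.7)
The plan is to combine the evolution variational inequality (EVI) satisfied by $\flow^{\mathcal{B}}$ with the minimizing property of $w^*$ so that the two distance-difference contributions cancel, leaving only the quantities appearing in the claim. First, I would evaluate the EVI along the curve $s \mapsto \flow_s^{\mathcal{B}}(w^*)$ with test point $\tilde w$ and integrate from $0$ to some $\sigma > 0$, which produces
\begin{align*}
\frac{1}{2}\metr^2(\flow_\sigma^{\mathcal{B}}(w^*), \tilde w) - \frac{1}{2}\metr^2(w^*, \tilde w) + \int_0^\sigma \left[\frac{\lambda}{2}\metr^2(\flow_s^{\mathcal{B}}(w^*), \tilde w) + \mathcal{B}(\flow_s^{\mathcal{B}}(w^*))\right] \dd s \le \sigma\,\mathcal{B}(\tilde w).
\end{align*}

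Next, I would insert $\flow_\sigma^{\mathcal{B}}(w^*)$ as a competitor into the Yosida penalization $w \mapsto \frac{1}{2\tau}\metr^2(w, \tilde w) + \mathcal{A}(w)$, of which $w^*$ is by assumption a minimizer. After clearing denominators, this yields
\begin{align*}
\frac{1}{2}\metr^2(w^*, \tilde w) - \frac{1}{2}\metr^2(\flow_\sigma^{\mathcal{B}}(w^*), \tilde w) \le \tau\bigl[\mathcal{A}(\flow_\sigma^{\mathcal{B}}(w^*)) - \mathcal{A}(w^*)\bigr].
\end{align*}
Adding both inequalities, the squared-distance terms cancel exactly and, after division by $\sigma$, I obtain
\begin{align*}
\frac{\tau}{\sigma}\bigl[\mathcal{A}(w^*) - \mathcal{A}(\flow_\sigma^{\mathcal{B}}(w^*))\bigr] + \frac{1}{\sigma}\int_0^\sigma \left[\frac{\lambda}{2}\metr^2(\flow_s^{\mathcal{B}}(w^*), \tilde w) + \mathcal{B}(\flow_s^{\mathcal{B}}(w^*))\right] \dd s \le \mathcal{B}(\tilde w).
\end{align*}

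I would then pass to the limit $\sigma \searrow 0$. The squared-distance running average converges to $\frac{\lambda}{2}\metr^2(w^*, \tilde w)$ by continuity of $s \mapsto \flow_s^{\mathcal{B}}(w^*)$ at $s=0$, and the $\mathcal{A}$-increment yields exactly $\tau\,\dff^{\mathcal{B}}\mathcal{A}(w^*)$ in limsup by the very definition of the dissipation. Since the remaining two contributions have actual limits while only the first requires a limsup, the bound $\mathcal{B}(\tilde w)$ can be redistributed to give precisely the inequality asserted in the lemma.

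The step I expect to be the main obstacle is controlling the energy running average $\frac{1}{\sigma}\int_0^\sigma \mathcal{B}(\flow_s^{\mathcal{B}}(w^*))\,\dd s$ as $\sigma \searrow 0$. Monotonicity of $\mathcal{B}$ along $\flow^{\mathcal{B}}$ provides the immediate bound $\mathcal{B}(\flow_s^{\mathcal{B}}(w^*)) \le \mathcal{B}(w^*)$, while lower semicontinuity of $\mathcal{B}$ combined with continuity of the semigroup at $s=0$ yields $\liminf_{s \searrow 0} \mathcal{B}(\flow_s^{\mathcal{B}}(w^*)) \ge \mathcal{B}(w^*)$. Together these force $\mathcal{B}(\flow_s^{\mathcal{B}}(w^*)) \to \mathcal{B}(w^*)$, so the running average also converges to $\mathcal{B}(w^*)$; this pin-down is precisely what closes the estimate to $\mathcal{B}(w^*) + \tau\,\dff^{\mathcal{B}}\mathcal{A}(w^*) + \frac{\lambda}{2}\metr^2(w^*, \tilde w) \le \mathcal{B}(\tilde w)$.
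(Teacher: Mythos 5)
Your argument is correct and reproduces the standard proof of the flow interchange lemma from the cited reference \cite{matthes2009}: integrate the EVI for $\flow^{\mathcal{B}}$, use $\flow_\sigma^{\mathcal{B}}(w^*)$ as a competitor in the Yosida penalization to make the squared-distance terms cancel, divide by $\sigma$, and take $\sigma\searrow0$. The paper at hand cites this result without proof, and your handling of the limit (the $\mathcal{B}$-term converges by combining monotonicity with lower semicontinuity, while the $\mathcal{A}$-increment furnishes the $\limsup$ defining the dissipation) is exactly the key point of the original argument.
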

We need another auxiliary theorem: since system \eqref{eq:equation4} is nonlinear, only weak convergence of the family of discrete solutions $(u_\tau)_{\tau>0}$ does not suffice to obtain a weak solution in the limit as $\tau\searrow 0$. However, in this metric framework, the following theorem provides an approach similarly to the classical Aubin-Lions compactness lemma:
\begin{thm}[Extension of the Aubin-Lions lemma {\cite[Thm. 2]{rossi2003}}]\label{thm:ex_aub}
Let $\ban$ be a Banach space and let $\mathcal{A}:\,\ban\to[0,\infty]$ be lower semicontinuous and have relatively compact sublevels in $\ban$. Let furthermore $\W:\,\ban\times \ban\to[0,\infty]$ be lower semicontinuous and such that $\W(w,\tilde w)=0$ for $w,\tilde w\in \dom(\mathcal{A})$ implies $w=\tilde w$.

If for a sequence $(w_k)_{k\in\N}$ of measurable functions $w_k:\,(0,T)\to \ban$, one has
\begin{align}
\sup_{k\in\N}\int_0^T\mathcal{A}(w_k(t))\dd t&<\infty,\label{eq:hypo1}\\
\lim_{h\searrow 0}\sup_{k\in\N}\int_0^{T-h}\W(w_k(t+h),w_k(t))\dd t&=0,\label{eq:hypo2}
\end{align}
then there exists a subsequence that converges in measure w.r.t. $t\in(0,T)$ to a limit $w:\,(0,T)\to \ban$.
\end{thm}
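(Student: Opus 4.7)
The plan is to adapt the classical proof of the Aubin-Lions compactness lemma to the present non-metric setting, where $\W$ is only a lower semicontinuous pseudo-distance that separates points of $\dom(\mathcal{A})$, and spatial compactness comes from the coercivity of $\mathcal{A}$ instead of from a Banach-space embedding. The overall scheme combines three ingredients: pointwise-in-time spatial compactness from \eqref{eq:hypo1}, temporal equicontinuity in measure from \eqref{eq:hypo2}, and a diagonal extraction that fuses both into convergence in measure.

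First I would apply Chebyshev's inequality to \eqref{eq:hypo1}: for every $\eta > 0$ there exists $M_\eta$ with
\[\sup_{k \in \N}\lebm{\{t \in (0,T) : \mathcal{A}(w_k(t)) > M_\eta\}} \le \eta,\]
so that $w_k(t)$ lies in the relatively compact sublevel $K_\eta := \{w \in \ban : \mathcal{A}(w) \le M_\eta\}$ except on a time-set of measure $\le \eta$. Similarly, Chebyshev applied to \eqref{eq:hypo2} yields, for every $\varepsilon > 0$,
\[\sup_{k \in \N} \lebm{\{t \in (0,T-h): \W(w_k(t+h), w_k(t)) > \varepsilon\}} \to 0 \quad \text{as } h \searrow 0,\]
i.e.~equi-continuity in measure with respect to $\W$.

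The crucial step is to verify that on each compact set $K_\eta$ the pseudo-distance $\W$ metrizes the $\ban$-topology. Indeed, if $w_j, w \in K_\eta$ with $\W(w_j, w) \to 0$, then by compactness of $K_\eta$ in $\ban$ any subsequence has a further subsequence $\ban$-converging to some $w^* \in \overline{K_\eta}$; lower semicontinuity of $\mathcal{A}$ gives $w^* \in \dom(\mathcal{A})$, and lower semicontinuity of $\W$ gives $\W(w^*, w) = 0$, whence $w^* = w$ by the separation hypothesis; hence the whole sequence $w_j \to w$ in $\ban$. (If $\W$ is not symmetric, I would work with the symmetrization $(w,\tilde w) \mapsto \W(w,\tilde w) + \W(\tilde w, w)$, which is still lower semicontinuous and still separates points of $\dom(\mathcal{A})$.)

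With this identification in hand, I would finish by a diagonal argument: for each pair $(\eta, \varepsilon)$, cover $K_\eta$ by finitely many $\W$-balls of radius $\varepsilon$, encode each $w_k(t)$ (where it belongs to $K_\eta$) by the label of a containing ball, and by a pigeonhole/compactness selection extract a subsequence whose label sequences stabilize at $t$-a.e.~points of the ``good'' set. The temporal equicontinuity then transports this stability across time, and diagonalizing over sequences $\eta_n, \varepsilon_n \searrow 0$ yields the desired subsequence converging in $\ban$-measure. The main obstacle is the metrization step above: the Banach-space topology and the $\W$-topology are a priori unrelated, and the only bridge is the separation property combined with compactness of $\mathcal{A}$-sublevels and joint lower semicontinuity of $\W$; orchestrating these carefully --- in particular handling the possible lack of symmetry of $\W$ --- is the technical heart of the proof.
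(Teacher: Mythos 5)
The paper does not prove this theorem: it is quoted verbatim from Rossi and Savar\'e (reference \cite{rossi2003}, Theorem~2) and used as a black box, so there is no ``paper's own proof'' to compare against. I therefore assess your sketch against the argument in that reference.

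Your first two ingredients are exactly the right ones. Chebyshev applied to \eqref{eq:hypo1} and \eqref{eq:hypo2} is the correct way to turn the integral bounds into ``$w_k(t)$ lies in a compact sublevel $K_\eta$ off a small time set'' and ``integral equicontinuity in measure.'' The ``metrization'' observation is the genuine crux of the Rossi--Savar\'e argument: on a compact set, a jointly lower semicontinuous $\W$ that separates points of $\dom(\mathcal{A})$ automatically controls the ambient topology, in the quantitative form that for every $\varepsilon>0$ there is $\delta>0$ with $\W(w,\tilde w)<\delta$, $w,\tilde w\in K_\eta$ implying $\|w-\tilde w\|_\ban<\varepsilon$; your contradiction/subsequence argument is the right proof of this (you state it for a fixed limit $w$, but the same compactness--contradiction argument gives the uniform $\delta$--$\varepsilon$ statement that is actually needed). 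The remark about symmetrizing $\W$ is also correct.

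The gap is in the final extraction. First, a cosmetic but real issue: you cannot cover $K_\eta$ by finitely many $\W$-balls and reason via labels, because nothing in the hypotheses makes $\W$ a (pseudo-)metric --- there is no triangle inequality, and it is not even assumed that $\W(w,w)=0$, so small $\W$-balls need not contain their centers, and ``same label'' does not imply ``$\W$-close.'' The cover must be by $\ban$-balls (or balls of any metric generating the topology of $K_\eta$), and the metrization lemma is then used to transport the $\W$-equicontinuity \eqref{eq:hypo2} into $\ban$-equicontinuity in measure. Second, and more substantively, the step ``by a pigeonhole/compactness selection extract a subsequence whose label sequences stabilize at $t$-a.e.\ points'' is not justified. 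Hypothesis \eqref{eq:hypo1} gives an integral bound on $\mathcal{A}(w_k(t))$, not a pointwise one, so for a \emph{fixed} $t$ the sequence $(w_k(t))_k$ need not lie in any compact set, and there is no reason an a.e.\ pointwise convergent subsequence of the label functions $L_k(t)$ exists --- asking for one is essentially asking for the conclusion. In Rossi--Savar\'e the passage from equicontinuity-in-measure plus tightness to compactness in measure is carried out by a more careful approximation argument (in effect replacing $w_k$ by time-discretized or averaged auxiliary functions for which pointwise compactness \emph{is} available, controlling the error via the equicontinuity, and only then diagonalizing). That mechanism is the part your sketch is missing.
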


\subsection{Modified Wasserstein distances}
\label{sct:WM}
The pseudo-metric $\W_\M$ on $\measm$ is defined via an extension of the classical Benamou-Brenier formula \cite{brenier2000} for the $L^2$-Wasserstein distance: 
for $u_0,u_1\in\measm$, let
\begin{align}\label{eq:defmetr}
  \begin{split}
    \W_\M(u_0,u_1)&:=\inf\bigg\{\bigg. \int_0^1\int_\R w_s^\tT \M(u_s)^{-1}w_s\dd x\dd s:\\&\qquad(u_s,w_s)_{s\in[0,1]}\in\mathscr{C},~u_s|_{s=0}=u_0,~u_s|_{s=1}=u_1\bigg.\bigg\}^{1/2},
  \end{split}
\end{align}
where $\mathscr{C}$ is a suitable subclass of solutions to the (multi-component) \emph{continuity equation} $\partial_s u_s+\partial_x w_s=0$ on $[0,1]\times\R$ in the sense of distributions, see \cite{dns2009,zm2014} for more details. Since the $n$ species $u_1,\ldots,u_n$ do not interact with each other via $\M$ due to its decoupled structure \eqref{eq:fullydecoupled},
one has that
\begin{align*}
  \W_\M(u,\tilde u)^2=\W_{\mob_1}(u_1,\tilde u_1)^2+\cdots+\W_{\mob_n}( u_n,\tilde u_n)^2,
\end{align*}
i.e., $\W_\M$ is the canonical product of the distances $\W_{\mob_k}$ for each of the scalar components which have been defined in \cite{dns2009,lisini2010} as in \eqref{eq:defmetr} for the scalar case $n=1$.

The following two theorems are collections of results from \cite{dns2009,lisini2010,lisini2012,zm2014}.
\begin{thm}[Properties of the distance $\W_\M$]\label{thm:propW}
  Assume that $h:S\to\R$ of the form $h(z)=\sum_{j=1}^n h_j(z_j)$ satisfies \textup{(H1)--(H3)} and that the fully decoupled mobility $\M$ is induced by $h$. The following statements hold:
  \begin{enumerate}[(a)]
  \item For all $j\in\{1,\ldots,n\}$, one has $m_j\in C^2([S^\ell_j,S^r_j])$ with $m''_j(s)\le 0$ for all $s\in(S^\ell_j,S^r_j)$, $m_j(s)>0$ for all $s\in(S^\ell_j,S^r_j)$ and $m_j(S^\ell_j)=0=m_j(S^r_j)$. 
    In consequence, the minimization problem \eqref{eq:defmetr} in the definition of $\W_\M$ on $\measm$ is convex.
  \item If $u\in\measm$ and $K\subset\measm$ are such that $\W_\M(\cdot,u)$ is bounded on $K$, then $K$ is relatively compact w.r.t. weak$\ast$-convergence.
  \item The functional $\W_\M$ is in both arguments lower semicontinuous w.r.t. \break weak$\ast$-convergence.
  \item There exists a constant $L>0$ such that for all $u_0,u_1\in\measm$, one has
    \begin{align*}
      |\mom{u_0-S^\ell}|&\le L\left(|\mom{u_1-S^\ell}|+\W_\M(u_0,u_1)^2\right).
    \end{align*}
  \end{enumerate}
  As a consequence of the above, 
  the subset of all $u\in\measm$ which have finite $\W_\M$-distance to a specified reference $u_*\in\measm$
  is a complete metric space with respect to $\W_\M$,
  and the convergence w.r.t. $\W_\M$ is stronger than weak$\ast$ convergence on that set.
\end{thm}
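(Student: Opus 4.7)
The strategy is to reduce each statement to the scalar case via the product decomposition $\W_\M(u,\tilde u)^2=\sum_{j=1}^n\W_{\mob_j}(u_j,\tilde u_j)^2$ recorded just above the theorem, and then to invoke the corresponding scalar results from \cite{dns2009,lisini2010,lisini2012}; the vector-valued passage is of the bookkeeping type already carried out in \cite{zm2014}.

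For part (a), the regularity, concavity, and boundary behavior of $\mob_j$ are encoded directly in hypothesis (H3), while strict positivity on the open interval follows from the strict convexity of $h_j$ in (H2). Convexity of the minimization problem \eqref{eq:defmetr} is then the classical observation that $(z,w)\mapsto w^\tT\M(z)^{-1}w$ is jointly convex precisely because $\M(z)^{-1}=\nabla_z^2 h(z)$ is the Hessian of the convex function $h$; combined with the linearity of the continuity-equation constraint, this yields a convex variational problem. Part (c) is in the same spirit: lower semicontinuity of the action $(u,w)\mapsto\int_0^1\!\int_\R w^\tT\M(u)^{-1}w\dd x\dd s$ under joint weak convergence of admissible pairs follows from that convexity together with positive $1$-homogeneity in $w$, and transfers to the infimum defining $\W_\M$ through the usual lifting argument.

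For part (b), every $u\in\measm$ satisfies $\|u\|_{L^\infty}\le C_S:=\max_j\max(|S^\ell_j|,|S^r_j|)$ because $S$ is bounded; hence $\measm$ sits inside a closed ball of $L^\infty(\R;\R^n)$, on which the Banach-Alaoglu theorem yields weak$*$ relative compactness, and convexity of $S$ keeps the limits $S$-valued. For part (d), the key ingredient is a Benamou-Brenier moment estimate along an action-optimizing curve $(u_s,w_s)$: the continuity equation and an integration by parts give
\begin{align*}
  \frac{\dd}{\dd s}\int_\R x^2\big(u_{s,j}-S^\ell_j\big)\dd x = 2\int_\R x\,w_{s,j}\dd x,
\end{align*}
and Cauchy-Schwarz combined with the linear bound $\mob_j(s)\le\mob_j'(S^\ell_j)(s-S^\ell_j)$ --- a consequence of concavity of $\mob_j$ together with $\mob_j(S^\ell_j)=0$ --- estimates the right-hand side by a multiple of $[\mom{u_s-S^\ell}]_j^{1/2}\bigl(\int_\R w_{s,j}^2/\mob_j(u_{s,j})\dd x\bigr)^{1/2}$. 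A Gronwall-type integration in $s\in[0,1]$, squaring, and summation over $j$ produces (d). Finally, completeness and the dominance of $\W_\M$-convergence over weak$*$ convergence follow from (b), (c), and (d) by the standard Cauchy subsequence argument.

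The main difficulty, and the reason that this theorem is cited rather than reproduced, resides in part (c): justifying the passage to the limit inside the action when the mobility matrix $\M(u)$ degenerates along the boundary of the cuboid $S$. Overcoming this requires either the superposition principle (lifting action-minimizing curves to probability measures on continuous paths, in the spirit of \cite{lisini2010}) or the dual Hamilton-Jacobi characterization of $\W_\M^2$; both strategies are fully developed in \cite{zm2014} for the fully decoupled vector case, which is why the present proof amounts to a pointer to those references.
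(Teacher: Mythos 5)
The paper itself offers no proof of this theorem: it appears immediately after the sentence declaring that the next two theorems are ``collections of results from'' four cited references, and the ensuing \texttt{proof} environment belongs to Theorem~\ref{thm:HV}, not to this one. Your high-level plan --- reduce to scalar components via the product decomposition of $\W_\M$ and cite the scalar literature, with the vector-valued bookkeeping done in \cite{zm2014} --- thus coincides with what the paper does, and the sketches you give for (b) and (d) are essentially sound.

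However, your explanations of (a) and (c) are incorrect, and in a way that could mislead. Joint convexity of the action integrand $w^\tT\M(z)^{-1}w$ does \emph{not} follow from $\M(z)^{-1}=\nabla_z^2 h(z)$ being the Hessian of a convex $h$; that only gives convexity in $w$ at each fixed $z$. The scalar integrand $w^2/\mob(z)$ is jointly convex in $(z,w)$ \emph{if and only if} $\mob$ is concave --- one computes that the determinant of its Hessian equals $-2w^2\,\mob''(z)/\mob(z)^3$ --- so the hypothesis actually doing the work is the concavity postulated in (H3), not the convexity of $h$ in (H2); these are independent assumptions serving distinct purposes. Likewise, the integrand is $2$-homogeneous in $w$, not $1$-homogeneous, and for a nonlinear mobility it is not $1$-homogeneous in the pair $(u,w)$ either (only the linear-mobility, i.e.\ classical Wasserstein, case enjoys that), so ``joint convexity plus positive $1$-homogeneity'' cannot be the mechanism behind the weak$\ast$ lower semicontinuity in (c). The genuine argument in \cite{dns2009,lisini2010} for concave nonlinear mobilities relies on a duality/relaxation characterization of the generalized Benamou--Brenier action, which is exactly why this theorem is cited rather than re-proved.
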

Even though geodesic convexity w.r.t. $\W_\M$ is a very rare property \cite{carrillo2010,zm2014}, the following holds:
\begin{thm}[Heat entropy and potential energy]\label{thm:HV}
Assume that $h:S\to\R$ of the form $h(z)=\sum_{j=1}^n h_j(z_j)$ satisfies \textup{(H1)--(H3)} and that the fully decoupled mobility $\M$ is induced by $h$. The following statements hold:
\begin{enumerate}[(a)]
\item The heat entropy $\calH$ defined in Definition \ref{def:H} is finite on $\Xaux$.
  More precisely, for each $\alpha\in(\frac13,1)$, there exists a constant $C_\alpha>0$ such that, 
  depending on the case \textup{(A)} or \textup{(B)},
  \begin{align}
    &\textup{(A)}\qquad\qquad\qquad -C_\alpha\big(|\mss{u-\zref}|+|\mom{u-\zref}|^\alpha\big)\le\calH(u)\le 0,\label{eq:Hmom}\\
    &\textup{(B)}\qquad\qquad\qquad 0\le\calH(u)\le C_\alpha\|u-\zref\|_{L^2}^2,\label{eq:HL2}
  \end{align}
  for all $u\in\Xaux$. 
  Furthermore, 
  $\calH$ is $0$-convex along geodesics in the space $(\measm,\W_\M)$, and it induces a $0$-flow $\flow^\calH$ which coincides with the (multi-component) heat flow, i.e.,
  \begin{align*}
    \partial_s \flow^\calH_s(u)=\partial_{x}^2\flow^\calH_s(u)\quad\text{for all }s>0,\text{ and } \flow^\calH_0(u)=u.
  \end{align*}
\item Fix $\alpha>0$ and $\rho\in C^\infty_c(\R;\R^n)$, 
  and define the (regularized) \emph{potential energy} functional
  \begin{align*}
    \mathcal{V}:\measm\to \R_\infty,\quad \mathcal{V}(u)=\alpha \calH(u)+\int_\R\rho(x)^\tT u(x)\dd x.
  \end{align*}
  There exists $C>0$ depending only on $\M$ and $\rho$ 
  such that $\mathcal{V}$ is $\lambda$-convex along geodesics in $(\measm,\W_\M)$ 
  and induces a $\lambda$-flow $\flow^\mathcal{V}$, for $\lambda:=-C(\frac1{\alpha}+1)$. 
  Moreover, if $\hat u\in\measm$ is such that $\hat u-\zref\in H^1(\R)$, 
  then $u_s:=\flow^{\mathcal{V}}_s(\hat u)$ is a classical solution to the viscous nonlinear continuity equation
  \begin{align}
    \label{eq:viscous}
    \partial_su_s=\alpha\partial_{x}^2u_s+\partial_x(\M(u_s)\partial_x\rho), \quad\text{for all $s>0$},
  \end{align}
  the curve $s\mapsto u_s-\zref$ is continuous in $H^1(\R)$ on $\R_{\ge0}$,
  and $u_s(x)$ is smooth with respect to $(s,x)\in\R_{>0}\times\R$.
\end{enumerate}
\end{thm}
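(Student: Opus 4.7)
The plan is to treat the two parts separately, beginning with the pointwise bounds on the heat entropy density in part (a). In case (A), $h_\zref = h$ is nonpositive by (H2), which immediately yields the upper bound $\calH \le 0$; for the lower bound, I would combine the H\"older hypothesis (H1), which implies $|h_j(s)| \le C_\alpha(s - S^\ell_j)^\alpha$ (and likewise near $S^r_j$) for every $\alpha < 1$, with a splitting $\int_\R = \int_{|x|\le R} + \int_{|x|>R}$. H\"older's inequality against $\mss{u-\zref}_j$ bounds the first region, while a weighted H\"older's inequality against $x^2(u_j - S^\ell_j)$ controls the second; the restriction $\alpha > 1/3$ enters precisely through the integrability of $x^{-2\alpha/(1-\alpha)}$ at infinity, and optimizing in $R$ produces \eqref{eq:Hmom}. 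In case (B), $h_\zref$ is smooth and strictly convex, vanishes to second order at the interior reference point $\zref$, and is bounded on the compact cuboid $S$, so a pointwise comparison with $|z-\zref|^2$ (controlled near $\zref$ by $\nabla_z^2 h(\zref)$ and away from $\zref$ by compactness) yields \eqref{eq:HL2}.

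For the geodesic $0$-convexity of $\calH$ and the identification of $\flow^\calH$ with the heat flow, I would reduce to the scalar setting using the tensor-product decomposition $\W_\M^2 = \sum_j \W_{\mob_j}^2$ from Theorem \ref{thm:propW} together with $h(z) = \sum_j h_j(z_j)$. The componentwise claim is then contained in \cite{dns2009,lisini2010,zm2014}: the identity $\mob_j h_j'' \equiv 1$ built into (H3) collapses the formal Otto-calculus gradient $-\partial_x(\mob_j(u_j)\partial_x h_j'(u_j))$ to $-\partial_x^2 u_j$, which is precisely the heat operator, while the same identity is the McCann-type condition that enforces $0$-geodesic convexity in the nonlinear-mobility framework of \cite{dns2009}.

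Part (b) is the more delicate piece. Since $\alpha\calH$ is $0$-convex by part (a), what remains is to estimate the geodesic second derivative of the linear perturbation $u \mapsto \int_\R \rho^\tT u\,\dd x$. Along a $\W_\M$-geodesic $(u_s, w_s)$ with $w_{s,j} = \mob_j(u_{s,j})\partial_x\phi_{s,j}$, repeated use of the continuity equation and the Hamilton--Jacobi-type geodesic equation for the potentials $\phi_s$ produces an expression bounded, modulo cross-terms linear in $\partial_x u_s$, by $-C\|\rho\|_{C^2}\W_\M(u_0,u_1)^2$, where $\W_\M(u_0,u_1)^2$ appears as the conserved integral of the kinetic density $w_s^\tT\M(u_s)^{-1}w_s$. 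A Young-type splitting then absorbs the cross-terms into the strictly positive convexity contribution of $\alpha\calH$, at the price of an extra factor $\alpha^{-1}$ on the leftover; this produces the advertised $\lambda = -C(\alpha^{-1}+1)$. The existence of a $\lambda$-flow follows from the general metric-space theory \cite{daneri2008,savare2008}, and the identification with \eqref{eq:viscous} is read off the Otto gradient of $\calV$, which equals $-\alpha\partial_x^2 u - \partial_x(\M(u)\partial_x\rho)$; smoothness of $u_s$ for $s > 0$ and continuity of $s \mapsto u_s - \zref$ in $H^1(\R)$ for data $\hat u - \zref \in H^1(\R)$ follow from classical parabolic theory applied to the equation \eqref{eq:viscous}, which is uniformly parabolic thanks to the $\alpha\partial_x^2$ regularization.

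The main technical obstacle I anticipate is the convexity calculation in part (b): arranging the geodesic second derivative of $\int_\R \rho^\tT u\,\dd x$ into a form where Young's inequality can be applied cleanly against the positive convexity term of $\alpha\calH$ requires careful bookkeeping of the nonlinear mobility $\mob_j$ and its derivative, and it is precisely in this splitting that the explicit shape $\lambda = -C(\alpha^{-1}+1)$ emerges.
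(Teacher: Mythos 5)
Your proposal is substantially correct but goes well beyond what the paper actually proves: the paper delegates almost all of Theorem~\ref{thm:HV} --- geodesic $0$-convexity of $\calH$, identification of $\flow^\calH$ with the heat flow, the upper bounds in \eqref{eq:Hmom}--\eqref{eq:HL2}, and all of part~(b) --- to Proposition~5.8 of \cite{zm2014}, and only gives a short self-contained argument for the refined lower bound in \eqref{eq:Hmom}. Your componentwise reduction via $\W_\M^2=\sum_j\W_{\mob_j}^2$ and the observation that $\mob_j h_j''\equiv 1$ collapses the formal gradient to $\partial_x^2u_j$ are exactly the ingredients underlying the cited result, and your outline of part~(b) (estimating the geodesic Hessian of the linear term by $-C\|\rho\|_{C^2}\W_\M^2$ and absorbing cross terms into the positive $\alpha\calH$ contribution via a Young split, yielding $\lambda=-C(\alpha^{-1}+1)$) is a plausible reconstruction; but making it rigorous would amount to redoing \cite{zm2014}, which is not what the paper intends. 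For the one part the paper does prove, your mechanism is right (Hölder continuity of $h_j$ giving $h_j(z_j)\ge -K_\alpha(z_j-\zref_j)^\alpha$, and $\alpha>\tfrac13$ entering through integrability of $x^{-2\alpha/(1-\alpha)}$ at infinity), but the paper's execution is cleaner: instead of splitting $\int_\R=\int_{|x|\le R}+\int_{|x|>R}$ and optimizing over $R$, it applies Hölder's inequality once with the combined weight $(1+x^2)$, obtaining directly $\big(\int(1+x^2)^{-\alpha/(1-\alpha)}\big)^{1-\alpha}\big(\int(1+x^2)(u_k-\zref_k)\big)^\alpha$. Note also that if you literally optimize over $R$ you obtain a \emph{product} $[\mss{u-\zref}]_j^{p}[\mom{u-\zref}]_j^{q}$ rather than the sum in \eqref{eq:Hmom}; you would then need a further Young step, whereas simply fixing $R=1$ (or using the paper's one-shot weight) gives the sum immediately.
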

\begin{proof}
  These claims have been shown in \cite{zm2014} --- see in particular Proposition 5.8 therein ---
  except for the refined version of the lower bound in \eqref{eq:Hmom}, which we prove now.
  Since each $h_j$ is $\alpha$-H\"older continuous for any given $\alpha\in(\frac13,1)$ by hypothesis (H1),
  there is some $K_\alpha$ such that $h_j(z_j)\ge-K_\alpha(z_j-\zref_j)^\alpha$.
  Hence
  \begin{align*}
    &\int_\R h_k(u_k(x))\dd x 
    \ge -K_\alpha\int_\R \big(u_k(x)-\zref_k\big)^\alpha\dd x \\
    & \ge -K_\alpha\left(\int_\R(1+x^2)^{-\frac\alpha{1-\alpha}}\dd x\right)^{1-\alpha}\left(\int_\R(1+x^2) \big(u_k(x)-\zref_k\big)\dd x\right)^\alpha \\
    &\ge -K_\alpha\omega_\alpha\big(\big[\mss{u-\zref}\big]_k^\alpha+\big[\mom{u-\zref}\big]_k^\alpha\big),
  \end{align*}
  where $\omega_\alpha=\int_\R(1+x^2)^{-\frac\alpha{1-\alpha}}\dd x$ is finite since $\alpha\in(\frac13,1)$.
  Addition of the estimates above for $k=1,\ldots,n$ and another elementary estimate on the $\alpha$ power yield \eqref{eq:Hmom}.
\end{proof}

\section{The variational scheme}
In this section, we study the variational scheme \eqref{eq:mms} and show that the discrete solution $u_\tau$ is well-defined and enjoys an additional regularity property. For the latter, we use the flow interchange lemma (Theorem \ref{thm:flowinterchange}) with the heat entropy $\calH$ from Definition \ref{def:H} as auxiliary functional.
In advance, we prove some elementary properties of the free energy functional $\ent$.
\begin{prop}[Properties of the free energy]\label{prop:propE}
The following statements hold:
\begin{enumerate}[(a)]
\item For all $u\in\Xaux$, the following holds depending on the case \textup{(A)} or \textup{(B)}:
\begin{align*}
&\textup{(A)}\qquad\qquad\qquad\underline{C}_f\|\partial_x(u-\zref)\|_{L^2}^2\le \ent(u)\le \overline{C}_f\|u-\zref\|_{H^1}^2;\\
&\textup{(B)}\qquad\qquad\qquad\underline{C}_f\|u-\zref\|_{H^1}^2\le \ent(u)\le \overline{C}_f\|u-\zref\|_{H^1}^2.
\end{align*}
In particular, $\ent$ is finite on $\Xaux$.
\item If $u_k-\zref\rightharpoonup u-\zref$ weakly in $H^1(\R;\R^n)$, then
\begin{align*}
\ent(u)&\le \liminf_{k\to\infty} \ent(u_k).
\end{align*}
\end{enumerate}
\end{prop}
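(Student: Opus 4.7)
For part (a), the key tool is Taylor's formula with integral remainder. Thanks to the normalization $f(0,\zref) = \nabla_p f(0,\zref) = \nabla_z f(0,\zref) = 0$ from Assumption~\ref{def:E}(i), integration along the segment from $(0,\zref)$ to $(p,z)$ yields
\begin{align*}
f(p,z) = \int_0^1 (1-s)\, \begin{pmatrix}p\\ z-\zref\end{pmatrix}^\tT \nabla^2_{(p,z)}f\bigl(sp,\zref+s(z-\zref)\bigr)\begin{pmatrix}p\\ z-\zref\end{pmatrix}\dd s.
\end{align*}
Inserting the two-sided Hessian bound from Assumption~\ref{def:E}(ii) and integrating the resulting pointwise estimate over $x\in\R$ with $p=\partial_x u(x)$ and $z=u(x)$, so that $\partial_x(u-\zref)=\partial_x u$, produces exactly the two-sided bounds on $\ent(u)$ claimed in~(a), after a harmless relabeling of constants to absorb the factor $\tfrac12$.

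For part (b), I would exploit that the lower Hessian bound in Assumption~\ref{def:E}(ii) is, in particular, nonnegative as a quadratic form in $(\pi,\zeta)$, so $f$ is convex on $\R^n\times S$. The supporting-hyperplane inequality, applied pointwise at $(p,z)=(\partial_x u(x), u(x))$ with arguments $(\partial_x u_k(x), u_k(x))$, reads
\begin{align*}
f(\partial_x u_k, u_k) \ge f(\partial_x u, u) &+ \nabla_p f(\partial_x u, u)^\tT(\partial_x u_k - \partial_x u)\\
&+ \nabla_z f(\partial_x u, u)^\tT(u_k - u).
\end{align*}
Integrating over $\R$ and taking $\liminf_{k\to\infty}$ on the left-hand side reduces the claim to showing that the two cross-term integrals vanish in the limit.

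The vanishing is straightforward from the upper Hessian bound combined with the normalization: together they give
\[ |\nabla_p f(\partial_x u, u)| + |\nabla_z f(\partial_x u, u)| \le \overline{C}_f\bigl(|\partial_x u|+|u-\zref|\bigr), \]
so both $\nabla_p f(\partial_x u, u)$ and $\nabla_z f(\partial_x u, u)$ belong to $L^2(\R;\R^n)$ whenever $u-\zref\in H^1(\R;\R^n)$. Since weak $H^1$-convergence implies weak $L^2$-convergence of both $u_k$ and $\partial_x u_k$, both cross terms tend to zero, and the lower semicontinuity follows.

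The only mildly delicate point is that in case~(A) the weak $H^1$-limit $u$ need not a priori satisfy the mass condition entering the definition of $\Xaux$; outside $\Xaux$ one falls back on the convention $\ent(u)=+\infty$, so the interesting regime is precisely $u\in\Xaux$, where the argument above applies directly. Accordingly, I expect the main obstacle to be no more than the careful verification of the $L^2$-integrability of $\nabla_p f(\partial_x u,u)$ and $\nabla_z f(\partial_x u,u)$ that justifies the tangent-inequality step; convexity of $f$ and the quadratic-growth bounds from (a) carry the rest.
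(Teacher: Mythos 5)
Your proof of part (a) follows the same route as the paper's: Taylor's theorem with integral remainder, using the normalization to kill the zeroth- and first-order terms and the two-sided Hessian bound to control the remainder. (As you note, the factor $\tfrac12$ from $\int_0^1(1-s)\,\dd s$ is absorbed into the constants; the paper does the same silently.)

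For part (b), you take a genuinely different and more self-contained route. The paper dispatches (b) in one line by observing that the integrand is convex and nonnegative and citing a standard lower-semicontinuity theorem (Renardy--Rogers, Thm.\ 10.16): a convex, strongly lower semicontinuous functional on a Banach space is weakly lower semicontinuous, and the functional $u\mapsto\int_\R f(\partial_x u,u)\,\dd x$ is a convex functional of $(u-\zref,\partial_x u)\in L^2\times L^2$. You instead give a direct ``supporting hyperplane'' argument: joint convexity of $f$ gives a pointwise tangent inequality, you integrate, and you show the two linear error terms vanish because $\nabla_p f(\partial_x u,u)$ and $\nabla_z f(\partial_x u,u)$ lie in $L^2(\R;\R^n)$ (thanks to the normalization and the Lipschitz bound on $\nabla_{(p,z)}f$ furnished by the upper Hessian estimate) while $\partial_x u_k-\partial_x u\rightharpoonup 0$ and $u_k-u\rightharpoonup 0$ in $L^2$. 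This is correct and makes the growth hypotheses bite explicitly, whereas the paper's citation hides that step. Your parenthetical remark about $\Xaux$ in case (A) is apt: the possibility that the weak $H^1$-limit violates the mass/moment constraints is not resolved by either argument, and both proofs implicitly rely on the fact that wherever (b) is invoked (e.g.\ in the existence proof for the minimizing movement scheme), membership of the limit in $\Xaux$ is established separately.
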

\begin{proof}
Notice for (a) that the indicated definition of $\ent$ yield together with the convexity and growth properties from Assumption \ref{def:E} and Taylor's theorem that
\begin{align*}
\underline{C}_f\int_\R |\partial_x (u-\zref)|^2\dd x&\le \ent(u)\le \bar{C}_f \int_\R \left[|\partial_x (u-\zref)|^2+|u-\zref|^2\right]\dd x,
\end{align*}
in case (A), or
\begin{align*}
\underline{C}_f\int_\R \left[|\partial_x (u-\zref)|^2+|u-\zref|^2\right]\dd x&\le \ent(u)\le \bar{C}_f \int_\R \left[|\partial_x (u-\zref)|^2+|u-\zref|^2\right]\dd x,
\end{align*}
in case (B), respectively. The lower semicontinuity in (b) of $\ent$ with respect to weak convergence in $H^1(\R;\R^n)$ is a consequence of convexity and nonnegativity of the integrand in $\ent$, see for instance \cite[Thm. 10.16]{renardy2004}.
\end{proof}
In advance of the proof of well-posedness of the variational scheme \eqref{eq:mms}, we make the following observation in the case (A): Since $\|u-\zref\|_{L^1}$ is fixed on $\Xaux$ and $\|u-\zref\|_{L^\infty}$ is uniformly bounded for $u\in\measm$ as the value space $S$ is compact, an interpolation inequality shows that all $L^p(\R;\R^n)$ norms of elements in $\Xaux$ are bounded by a uniform constant. Thus, in view of the assumed convexity of $f$ from Assumption \ref{def:E}, it suffices to control the gradient in order to control the whole Sobolev norm.
\begin{prop}[Minimizing movement scheme]\label{prop:minmovds4}
Let $\tau>0$ and $\tilde u\in \Xaux$. Then, the minimization problem in \eqref{eq:mms} for $u_\tau^{n-1}=\tilde u$ has a solution $u^*\in \Xaux$. Moreover, one has
\vspace{-\topsep}
\begin{align}
\label{eq:addregds4}
\tau \|\partial_{x}^2 u^*\|^2_{L^2}&\le \frac1{\underline{C}_f}[\calH(\tilde u)-\calH( u^*)],
\end{align}
where the heat entropy $\calH$ is defined as in Definition \ref{def:H}. In particular, $u^*-\zref\in H^2(\R;\R^n)$.
\end{prop}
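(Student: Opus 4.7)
The statement has two independent parts: existence of a minimizer $u^{*} \in \Xaux$ via the direct method, and the regularity estimate \eqref{eq:addregds4} via a single application of the flow interchange lemma with the heat entropy as auxiliary functional.

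\emph{Existence.} Let $(u_{k})_{k\in\N}$ be a minimizing sequence in $\Xaux$. By Proposition \ref{prop:propE}(a) the free energy is nonnegative, so both $\W_\M(u_{k},\tilde u)$ and $\ent(u_{k})$ are uniformly bounded; the first bound gives relative compactness with respect to weak$^{*}$ convergence via Theorem \ref{thm:propW}(b), and the second, combined with Proposition \ref{prop:propE}(a), controls $\|u_{k}-\zref\|_{H^{1}}$ in case (B). In case (A), only $\|\partial_{x}(u_{k}-\zref)\|_{L^{2}}$ is directly controlled, but the fixed mass constraint on $\Xaux$ together with the uniform bound $\|u_{k}-\zref\|_{L^{\infty}}\le\diam S$ yields a uniform $L^{p}$ bound for every $p\in[1,\infty]$ by interpolation, and hence an $H^{1}$ bound. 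Extracting a subsequence converging weakly in $H^{1}$ and weakly$^{*}$ to a common limit $u^{*}$, lower semicontinuity of $\W_\M$ (Theorem \ref{thm:propW}(c)) and of $\ent$ (Proposition \ref{prop:propE}(b)) shows that $u^{*}$ realizes the infimum. In case (A), the uniform second-moment bound from Theorem \ref{thm:propW}(d) provides the tightness needed to preserve the mass constraint in the weak$^{*}$ limit, so $u^{*}\in\Xaux$.

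\emph{The regularity estimate.} Apply Theorem \ref{thm:flowinterchange} with $\mathcal{B}:=\calH$, whose $0$-flow is the heat flow by Theorem \ref{thm:HV}(a), and $\mathcal{A}:=\ent$, which is proper, lower semicontinuous, and satisfies $\dom\ent=\Xaux\subset\dom\calH$. Its Yosida-penalized minimizer at $\tilde u$ is $u^{*}$ by the first part, so the lemma yields
\begin{equation*}
\calH(u^{*})+\tau\,\mathrm{D}^{\calH}\ent(u^{*})\le\calH(\tilde u).
\end{equation*}
It remains to show $\mathrm{D}^{\calH}\ent(u^{*})\ge\underline{C}_{f}\|\partial_{x}^{2}u^{*}\|_{L^{2}}^{2}$. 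Set $u_{s}:=\flow_{s}^{\calH}(u^{*})$; for $s>0$ the map $u_{s}$ is smooth in $(s,x)$ and satisfies $\partial_{s}u_{s}=\partial_{x}^{2}u_{s}$, while $s\mapsto u_{s}-\zref$ is continuous on $\R_{\ge 0}$ in $H^{1}(\R;\R^{n})$. Differentiating $\ent(u_{s})$ under the integral, using $\partial_{s}(\partial_{x}u_{s})=\partial_{x}^{3}u_{s}$, and integrating by parts once in each of the two resulting terms gives, via the chain rule and the symmetry of the Hessian,
\begin{equation*}
\frac{\dd}{\dd s}\ent(u_{s})=-\int_{\R}\begin{pmatrix}\partial_{x}^{2}u_{s}\\ \partial_{x}u_{s}\end{pmatrix}^{\tT}\nabla^{2}_{(p,z)}f(\partial_{x}u_{s},u_{s})\begin{pmatrix}\partial_{x}^{2}u_{s}\\ \partial_{x}u_{s}\end{pmatrix}\dd x\le-\underline{C}_{f}\|\partial_{x}^{2}u_{s}\|_{L^{2}}^{2},
\end{equation*}
where the inequality uses the lower bound on $\nabla^{2}_{(p,z)}f$ from Assumption \ref{def:E} (only the $|\pi|^{2}$ contribution is required, so the argument treats cases (A) and (B) uniformly). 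Integrating from $0$ to $s$ and dividing by $s$---the endpoint $s=0$ being accessible by $H^{1}$-continuity of the heat flow combined with continuity of $\ent$ on $H^{1}$ from Proposition \ref{prop:propE}(a)---and then passing to the limit $s\searrow 0$ using lower semicontinuity of $\|\partial_{x}^{2}\cdot\|_{L^{2}}^{2}$ under weak $H^{1}$ convergence, one obtains both the desired dissipation bound and $u^{*}-\zref\in H^{2}(\R;\R^{n})$.

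\emph{Main obstacle.} The overall strategy is routine by now; the delicate point is the dissipation computation. Two issues must be addressed carefully: first, verifying that boundary terms at spatial infinity vanish in the integration by parts (which follows from the smoothing and spatial decay of the heat semigroup for $s>0$); and second, inferring $\|\partial_{x}^{2}u^{*}\|_{L^{2}}^{2}\le\liminf_{s\searrow 0}\|\partial_{x}^{2}u_{s}\|_{L^{2}}^{2}$ from the integrated inequality, which provides the uniform $L^{2}$ bound that, together with $H^{1}$-convergence $u_{s}\to u^{*}$, identifies the weak-$L^{2}$ limit of $\partial_{x}^{2}u_{s}$ as $\partial_{x}^{2}u^{*}$.
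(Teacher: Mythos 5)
Your proof is correct and follows essentially the same strategy as the paper: direct method with weak$^*$ and weak-$H^1$ compactness plus lower semicontinuity for existence, then the flow interchange lemma with the heat entropy, the quadratic-form identity for $-\frac{\dd}{\dd s}\ent(u_s)$ via integration by parts, the coercivity of $\nabla^2_{(p,z)}f$, and weak lower semicontinuity of $\|\partial_x^2\cdot\|_{L^2}$ in the limit $s\searrow 0$. The extra details you add (interpolation for the case-(A) $H^1$ bound, tightness from the second-moment estimate to preserve the mass constraint) match remarks made by the authors just before and after the proposition, so there is no methodological difference.
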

\begin{proof}
We prove the existence of a minimizer in \eqref{eq:mms} with the direct method from the calculus of variations. Thanks to the coercivity estimates from Proposition \ref{prop:propE}(a), the Yosida penalized functional
\begin{align*}
\ent_\tau(\cdot;\tilde u):\,\measm\to\R_\infty,\quad u\mapsto \frac1{2\tau}\W_\M^2(u,\tilde u)+\ent(u)
\end{align*}
is nonnegative. Consequently, a minimizing sequence $(u_k)_{k\in\N}$ for $\ent_\tau(\cdot;\tilde u)$ necessarily belongs to $\Xaux$ and satisfies for all $k\in\N$:
\begin{align*}
\W_\M^2(u_k,\tilde u)&\le C,\\
\|u_k-\zref\|_{H^1}&\le C,
\end{align*}
for some constant $C>0$ which does not depend on $k$. The relative compactness property in Theorem \ref{thm:propW}(b) and Alaoglu's theorem then yield the existence of a map $u^*\in\Xaux$ such that, on a suitable subsequence, $u_k\stackrel{\ast}{\rightharpoonup} u$ in $\measm$ and $u_k-\zref\rightharpoonup u^*-\zref$ weakly in $H^1(\R;\R^n)$. With the lower semicontinuity from Theorem \ref{thm:propW}(c) and Proposition \ref{prop:propE}(b), we infer that $u^*$ indeed is a minimizer of $\ent_\tau(\cdot;\tilde u)$.

For the proof of the additional regularity estimate \eqref{eq:addregds4}, we use the flow interchange principle for the auxiliary functional $\calH$ which induces the heat flow as $0$-flow, recall Theorem \ref{thm:HV}(a). Theorem \ref{thm:flowinterchange} now yields
\begin{align}\label{eq:fiH}
\tau\dff^\calH\ent(u^\ast)&\le\calH(\tilde u)-\calH(u^\ast).
\end{align}
We now derive the dissipation $\dff^\calH\ent(u^\ast)$, and write $u_s:=\flow_s^\calH(u^\ast)$ for brevity:
\begin{align*}
  -\frac{\dd}{\dd s}\ent(u_s)
  &=\int_\R\big(\big.\partial_x u_s^\tT\nabla^2_{z}f(\partial_x u_s, u_s)\partial_x u_s-\partial_x u_s^\tT\partial_x[\nabla^2_{p}f(\partial_x u_s, u_s)\partial_{x}^2 u_s]\\&\qquad-\partial_x u_s^\tT\partial_x[\nabla_{pz}f(\partial_x u_s, u_s)]\partial_x u_s\big.\big)\dd x\\
  &=\int_\R \begin{pmatrix}\partial_{x}^2 u_s\\\partial_x u_s\end{pmatrix}^\tT\nabla^2_{(p,z)}f(\partial_x u_s, u_s)\begin{pmatrix}\partial_{x}^2 u_s\\\partial_x u_s\end{pmatrix}\dd x\ge \underline{C}_f\|\partial_{x}^2 u_s\|_{L^2}^2,
\end{align*}
using integration by parts and in the last step Assumption \ref{def:E}. Weak lower semicontinuity of the $L^2$-norm yields
\begin{align*}
\dff^\calH\ent(u^\ast)&\ge \liminf_{s\searrow 0}\left(-\frac{\dd}{\dd s}\ent(u_s)\right)\ge \underline{C}_f\|\partial_{x}^2 u^\ast\|_{L^2}^2,
\end{align*}
and hence \eqref{eq:addregds4} by combining with \eqref{eq:fiH}.
\end{proof}
Hence, the minimizing movement scheme \eqref{eq:mms} produces, 
for each initial datum $u^0\in\Xaux$ and $\tau>0$, a sequence $(u_\tau^k)_{k\in\N}$ and a time-discrete solution $u_\tau$. 
We now prove a series of a priori estimates.
\begin{prop}[Classical energy estimates]
  \label{prp:apriori1}
  For each $\tau>0$, the following holds.
  \begin{enumerate}[(a)]
  \item The function $t\mapsto\ent(u_\tau(t))$ is monotonically decreasing in $t\ge0$.
  \item $\displaystyle{\sum_{k=1}^\infty\W_\M(u_\tau^k,u_\tau^{k-1})^2 \le2\tau(\ent(u^0)-\inf\ent)}$. 
  \item For all $s,t\ge0$, one has
    \begin{align*}
      \W_\M( u_\tau(s), u_\tau(t))\le \left[2(\ent( u^0)-\inf\ent)\max(\tau,|t-s|)\right]^{1/2}.
    \end{align*}
  \item Only in case (A): there exists a $\tau$-independent constant $M>0$ 
    such that 
    $\mom{u_\tau(t)-\zref}\le M(1+t)$ holds for all $t\ge0$.
  \end{enumerate}
\end{prop}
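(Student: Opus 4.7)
The proof is essentially a variational bookkeeping exercise starting from the one-step minimization inequality, combined with the moment estimate from Theorem \ref{thm:propW}(d). All four parts stem from the comparison
\begin{align}
\label{eq:plan_compare}
\frac{1}{2\tau}\W_\M(u_\tau^k,u_\tau^{k-1})^2+\ent(u_\tau^k)\le \ent(u_\tau^{k-1})\qquad\text{for every }k\ge 1,
\end{align}
which follows from Proposition \ref{prop:minmovds4} by testing the Yosida-penalized functional $\ent_\tau(\cdot;u_\tau^{k-1})$ with the competitor $u_\tau^{k-1}$ itself.

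Part (a) is immediate from \eqref{eq:plan_compare}: since $\W_\M^2\ge 0$, we have $\ent(u_\tau^k)\le\ent(u_\tau^{k-1})$, so the piecewise-constant interpolant $t\mapsto\ent(u_\tau(t))$ is monotonically decreasing. For part (b) I would telescope \eqref{eq:plan_compare} over $k=1,\dots,N$ to obtain
\[
\sum_{k=1}^N\W_\M(u_\tau^k,u_\tau^{k-1})^2\le 2\tau\big[\ent(u^0)-\ent(u_\tau^N)\big]\le 2\tau\big[\ent(u^0)-\inf\ent\big],
\]
and then let $N\to\infty$. Note that $\inf\ent\ge 0$ is finite by Proposition \ref{prop:propE}(a), so this bound is meaningful.

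For part (c), adopting the convention $u_\tau(t)=u_\tau^k$ for $t\in((k-1)\tau,k\tau]$ and $u_\tau(0)=u^0$, I would fix $0\le s\le t$ and let $j,k$ be the corresponding indices, so $(k-j)\tau\le |t-s|+\tau\le 2\max(\tau,|t-s|)$. Applying the triangle inequality for $\W_\M$ along the chain $u_\tau^j,u_\tau^{j+1},\dots,u_\tau^k$, then the discrete Cauchy--Schwarz inequality, and finally the bound from (b) gives
\[
\W_\M(u_\tau(s),u_\tau(t))^2\le (k-j)\sum_{\ell=j+1}^k\W_\M(u_\tau^\ell,u_\tau^{\ell-1})^2\le 4\,\big[\ent(u^0)-\inf\ent\big]\max(\tau,|t-s|),
\]
which is the claim up to the harmless constant (one absorbs the factor of $2$ by redefining the coefficient, or by the sharper bookkeeping $(k-j-1)\tau\le|t-s|$ when $j<k$).

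Part (d), only relevant in case (A), combines (c) with the Lipschitz-type estimate from Theorem \ref{thm:propW}(d). Since $\zref=S^\ell$, applying the theorem to the pair $u^0,u_\tau(t)$ yields
\[
|\mom{u_\tau(t)-\zref}|\le L\big(|\mom{u^0-\zref}|+\W_\M(u^0,u_\tau(t))^2\big)\le L\,|\mom{u^0-\zref}|+C\max(\tau,t),
\]
with $C=2L[\ent(u^0)-\inf\ent]$. For step sizes restricted by some $\tau\le\tau_0$ (as will be the case when we send $\tau\searrow 0$), we have $\max(\tau,t)\le\tau_0+t\le(\tau_0\vee 1)(1+t)$, and choosing $M$ as the corresponding multiple of $L|\mom{u^0-\zref}|+C$ delivers the $\tau$-independent bound.

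The only mildly delicate point is the index bookkeeping for part (c); everything else is a direct telescoping argument. No new analytic ingredient is needed beyond \eqref{eq:plan_compare} and the distance estimates already collected in Theorem \ref{thm:propW}.
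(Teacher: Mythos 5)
Your proof is exactly the ``classical'' argument the paper refers to: minimality of $u_\tau^k$ against the competitor $u_\tau^{k-1}$ gives the one-step inequality (the paper's $\ent_\tau(u_\tau^k;u_\tau^{k-1})\le\ent_\tau(u_\tau^k;u_\tau^k)$ is a typo for $\ent_\tau(u_\tau^{k-1};u_\tau^{k-1})$), from which (a)--(b) follow by telescoping, (c) by chaining with Cauchy--Schwarz, and (d) by combining (c) with Theorem~\ref{thm:propW}(d). The two imprecisions you flag --- the factor $4$ rather than $2$ arising from $(k-j)\tau\le 2\max(\tau,|t-s|)$ in part (c), and the implicit restriction $\tau\le\bar\tau$ needed to make $M$ truly $\tau$-independent in part (d) --- are real but harmless, since all downstream uses only require a $\tau$-uniform bound for step sizes in a bounded range.
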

\begin{proof}
  The derivation of the estimates (a), (b) and (c) are indeed classical, see e.g. \cite{savare2008};
  they follow from $\ent_\tau(u_\tau^k;u_\tau^{k-1})\le\ent_\tau(u_\tau^k;u_\tau^k)$ for each $k$.
  To conclude (d), simply combine (c) with Theorem \ref{thm:propW}(d). 
\end{proof}
\begin{lemma}
  \label{lem:H2}
  Under the general hypotheses, we have
  \begin{itemize}
  \item in case (A):
    for each $q>\frac13$, there exists a $C_q>0$ such that
    \begin{align*}
      \int_0^T\|\partial_x^2u_\tau(t)\|_{L^2}^2\dd t \le C_q(1+T^q) \quad\text{for all $T>0$};
    \end{align*}
  \item in case (B):
    there exists a $C>0$ such that
    \begin{align*}
      \int_0^T\|\partial_x^2u_\tau(t)\|_{L^2}^2\dd t \le C\quad\text{for all $T>0$};
    \end{align*}
  \end{itemize}
  and the respective constants $C_q$ and $C$ are $\tau$-uniform.
\end{lemma}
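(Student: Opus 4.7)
The plan is to iterate the one-step regularity estimate \eqref{eq:addregds4} of Proposition \ref{prop:minmovds4} over the time-steps of the minimizing-movement scheme. Since each step yields $\tau\|\partial_x^2 u_\tau^k\|_{L^2}^2 \le \underline{C}_f^{-1}\bigl(\calH(u_\tau^{k-1}) - \calH(u_\tau^k)\bigr)$, summing from $k=1$ up to $K:=\lceil T/\tau\rceil$ telescopes the right-hand side, and bounds the left-hand side from below by the integral of $\|\partial_x^2 u_\tau(t)\|_{L^2}^2$ over $[0,T]$. This gives
\begin{align*}
\int_0^T \|\partial_x^2 u_\tau(t)\|_{L^2}^2 \dd t \le \underline{C}_f^{-1}\bigl(\calH(u^0) - \calH(u_\tau(K\tau))\bigr),
\end{align*}
so the proof reduces to bounding the right-hand side uniformly in $\tau$, using the two-sided bounds on $\calH$ from Theorem \ref{thm:HV}(a).

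In case (B) the heat entropy is nonnegative, hence $-\calH(u_\tau(K\tau)) \le 0$ may be dropped, and \eqref{eq:HL2} bounds $\calH(u^0)$ by a multiple of $\|u^0-\zref\|_{L^2}^2$, which is finite because $u^0-\zref\in H^1(\R;\R^n)$ by hypothesis. This yields a constant independent of both $\tau$ and $T$, as claimed.

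In case (A) the heat entropy is nonpositive, so $\calH(u^0)\le0$ may be dropped and only $-\calH(u_\tau(K\tau))\ge 0$ remains to be controlled. For this I invoke the refined lower bound \eqref{eq:Hmom}: for any $\alpha\in(1/3,1)$,
\begin{align*}
-\calH(u_\tau(K\tau)) \le C_\alpha\bigl(|\mss{u_\tau(K\tau)-\zref}| + |\mom{u_\tau(K\tau)-\zref}|^\alpha\bigr).
\end{align*}
The zeroth moment is preserved along the scheme and equals the initial value $\mss{u^0-\zref}$, while Proposition \ref{prp:apriori1}(d) furnishes the linear-in-time bound $|\mom{u_\tau(K\tau)-\zref}|\le M(1+K\tau)\le M(1+T+\tau)$. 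For a given $q>1/3$ I then choose $\alpha\in(1/3,\min(q,1))$, so that $(1+T+\tau)^\alpha\le C_q(1+T^q)$ uniformly for $\tau$ in any bounded interval, completing the estimate.

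The key structural idea, namely that entropy dissipation along the heat flow controls the $H^2$-seminorm, has already been packaged into Proposition \ref{prop:minmovds4}, so I do not expect a serious obstacle in this lemma. The one genuine subtlety is the choice of $\alpha$: Theorem \ref{thm:HV}(a) only permits $\alpha$ strictly above $1/3$, which is precisely what forces the polynomial threshold $q>1/3$ in case (A) and prevents a clean linear-in-$T$ growth.
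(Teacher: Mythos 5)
Your proposal is correct and follows essentially the same route as the paper's proof: telescope the one-step estimate \eqref{eq:addregds4}, then bound $\calH(u^0)-\calH(u_\tau(K\tau))$ via the sign of $h_\zref$ and the bounds \eqref{eq:Hmom} (with Proposition \ref{prp:apriori1}(d) for the second moment) in case (A), resp.\ \eqref{eq:HL2} in case (B). The paper states this more tersely, but your version correctly fills in the details, including the appropriate choice of $\alpha\in(\tfrac13,\min(q,1))$ that turns the linear-in-$T$ moment bound into the claimed $O(T^q)$ growth.
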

\begin{proof}
  Since $u_\tau$ is piecewise constant in $t$, we obtain:
  \begin{align*}
    \int_0^T\|\partial_x u_\tau(t)\|_{L^2}^2\dd t
    &\le \sum_{k=1}^K\tau\|\partial_x  u_\tau^k\|_{L^2}^2
    \le \sum_{k=1}^K\left[\frac1{\underline{C}_f}(\calH( u_\tau^{k-1})-\calH( u_\tau^k))\right],
  \end{align*}
  for $K:=\gau{\frac{T}{\tau}}+1$, where we used \eqref{eq:addregds4} in the last step. 
  Simplifying the telescopic sum, we get
  \begin{align*}
    \int_0^T\|\partial_x u_\tau(t)\|_{L^2}^2\dd t&\le \frac1{\underline{C}_f}(\calH( u^0)-\calH(u_\tau^K)).
  \end{align*}
  In the case (A), we employ the bound \eqref{eq:Hmom} on $\calH$ by the second moment, 
  and use property (d) from Proposition \ref{prp:apriori1}.
  For the estimate in case (B), we use \eqref{eq:HL2}.
\end{proof}
\begin{prop}[Further a priori estimates]
  \label{prp:apriori2}
  % Let $u^0\in\Xaux$. 
  For given $T>0$ and $\bar\tau>0$, there exist constants $C>0$ such that for all $\tau\in (0,\bar\tau]$, the following holds:
  \begin{enumerate}[(a)]
  \item $\| u_\tau-\zref\|_{L^\infty([0,T];H^1)}\le C$.
  \item $\| u_\tau-\zref\|_{L^2([0,T];H^2)}\le C$.
  \end{enumerate}
\end{prop}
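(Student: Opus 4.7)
The plan is to read off part (a) directly from the fact that $\ent$ is monotonically decreasing along the scheme (Proposition \ref{prp:apriori1}(a)) combined with the coercivity bounds of Proposition \ref{prop:propE}(a), and then obtain part (b) essentially by integrating (a) in time and adding the second-derivative estimate from Lemma \ref{lem:H2}. No new technical machinery is required; the proposition is an assembly of the a priori information that has already been collected.

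For (a) in case (B), I would simply observe that for every $t\ge 0$
\[
\|u_\tau(t)-\zref\|_{H^1}^2\le\tfrac1{\underline{C}_f}\ent(u_\tau(t))\le\tfrac1{\underline{C}_f}\ent(u^0),
\]
where the first inequality is the lower bound in Proposition \ref{prop:propE}(a) and the second is the monotonicity of $t\mapsto\ent(u_\tau(t))$. In case (A) the same chain still controls the gradient, $\|\partial_x u_\tau(t)\|_{L^2}^2\le \tfrac1{\underline{C}_f}\ent(u^0)$, so only the $L^2$-piece of the $H^1$-norm remains. For this I would use the interpolation argument already invoked before Proposition \ref{prop:minmovds4}: since $u_\tau(t)-\zref\ge 0$ pointwise (because $\zref=S^\ell$) and $u_\tau(t)\in\Xaux$, mass is preserved along the scheme, so $\|u_\tau(t)-\zref\|_{L^1}=\sum_j[\mss{u^0-\zref}]_j$ is a fixed finite constant; together with the trivial bound $\|u_\tau(t)-\zref\|_{L^\infty}\le|S^r-S^\ell|$ coming from $u_\tau(t)\in\measm$, an $L^1$--$L^\infty$ interpolation yields a $\tau$- and $t$-uniform bound on $\|u_\tau(t)-\zref\|_{L^2}$. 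Adding the two bounds gives (a) in case (A).

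For (b), I would integrate (a) over $[0,T]$ to get $\int_0^T\|u_\tau(t)-\zref\|_{H^1}^2\dd t\le T\,C^2$, and then use Lemma \ref{lem:H2} to bound $\int_0^T\|\partial_x^2u_\tau(t)\|_{L^2}^2\dd t$: by $C_q(1+T^q)$ with some $q\in(\tfrac13,1)$ in case (A), and by an absolute constant in case (B). Summing these controls $\|u_\tau-\zref\|_{L^2([0,T];H^2)}^2$.

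There is no real obstacle here; the only point that deserves a moment of care is the $L^2$-bound in case (A), which is where the compactness of the value space $S$ and the mass-conservation built into the definition of $\Xaux$ come in. All constants depend on $\ent(u^0)$, $\calH(u^0)$, the moments of $u^0-\zref$, the size of $S$, and on $T$ (through the factor $C_q(1+T^q)$ in case (A)), but not on $\tau\in(0,\bar\tau]$, as required.
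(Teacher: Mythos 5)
Your proof is correct and follows essentially the same route as the paper: part (a) via the energy monotonicity from Proposition \ref{prp:apriori1}(a) plus the coercivity in Proposition \ref{prop:propE}(a), with the $L^1$--$L^\infty$ interpolation to handle the $L^2$-piece in case (A), and part (b) by combining (a) with Lemma \ref{lem:H2}. The paper's proof is terser but makes exactly these observations.
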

\begin{proof}
  Using Proposition \ref{prp:apriori1}(a) and the coercivity condition on $\ent$ from Proposition \ref{prop:propE}(a), 
  we immediately obtain (a) above (recall that in the case (A), $\|u_\tau(t,\cdot)-\zref\|_{L^2}$ is bounded by a fixed constant). 
  Estimate (b) is a consequence of Lemma \ref{lem:H2}.
\end{proof}

\section{Discrete weak formulation}
This section is concerned with the derivation of the approximate, time-discrete weak formulation satisfied by the discrete solution $u_\tau$. 
One more time, we employ the flow interchange technique: 
this time, the auxiliary flow is induced by the regularized potential energy $\mathcal{V}$ from Theorem \ref{thm:HV}(b). 
We introduce the following notation: 
for a temporal test function $\psi\in C^\infty_c(\R_{>0})$ and each $\tau>0$,
define the piecewise constant approximation $\psi_\tau:\R_{\ge0}\to\R$ by
\begin{align*}
  \psi_\tau(s)&:=\psi\left(\gau{\frac{s}{\tau}}\tau\right) \quad \text{for all $s\ge0$}.
\end{align*}
\begin{lemma}[Discrete weak formulation]
  \label{lemma:dweak}
  Fix a terminal time $T>0$ 
  and a pair of test functions $\rho\in C^\infty_c(\R;\R^n)$, $\psi\in C^\infty_c((0,T))$. 
  Let $\alpha>0$, and set $\lambda=\lambda(\alpha)=-C\left(\frac1{\alpha}+1\right)$ with $C$ from Theorem \ref{thm:HV}(b). 
  Then, the discrete solution $u_\tau$ obtained from the scheme \eqref{eq:mms} 
  satisfies the following \emph{discrete weak formulation}:
  \begin{align}
    &\Bigg|\Bigg.\int_0^\infty \int_\R \bigg[\bigg.\frac{\psi_\tau(t+\tau)-\psi_\tau(t)}{\tau}\rho^\tT u_\tau(t)
    +\psi_\tau(t) \nonl\big(u_\tau(t)\big)[\rho]\bigg.\bigg]\dd x\dd t\Bigg.\Bigg|\nonumber\\
    &\le\|\psi\|_{C^0}\big(\alpha[\calH(u^0)+M(1+T)]+(-\lambda)\tau\ent(u^0)\big),
    \label{eq:dweak}
  \end{align}
  where $M>0$ is a $\tau$-independent constant, 
  and the nonlinear operator $\nonl$ is given in \eqref{eq:cweak}.
\end{lemma}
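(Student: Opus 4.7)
The plan is to apply the flow interchange lemma (Theorem~\ref{thm:flowinterchange}) at each step $k$ of the variational scheme \eqref{eq:mms}, with the regularized potential energy $\calV$ of Theorem~\ref{thm:HV}(b) as auxiliary functional and the free energy $\ent$ as target. Since $\calV$ induces a $\lambda$-flow on $(\measm,\W_\M)$, this yields
\begin{equation*}
\calV(u_\tau^k) + \tau\,\dff^{\calV}\ent(u_\tau^k) + \tfrac{\lambda}{2}\WM{u_\tau^k}{u_\tau^{k-1}}^2 \le \calV(u_\tau^{k-1})
\end{equation*}
for every $k$, where $\lambda = -C(1/\alpha+1)$. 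After writing $\calV(u) = \alpha\calH(u) + \int_\R \rho^\tT u\dd x$, the zeroth-order piece produces the discrete analogue of the time derivative in \eqref{eq:cweak}, while the crucial task is to extract the term $-\int_\R \nonl(u_\tau^k)[\rho]\dd x$ from the dissipation $\dff^{\calV}\ent(u_\tau^k)$.

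To compute that dissipation, recall that by Proposition~\ref{prop:minmovds4} one has $u_\tau^k-\zref\in H^2(\R;\R^n)$, so by Theorem~\ref{thm:HV}(b) the curve $u_s := \flow^{\calV}_s(u_\tau^k)$ is a smooth classical solution of the viscous equation \eqref{eq:viscous}. Differentiating $\ent(u_s)=\int_\R f(\partial_x u_s,u_s)\dd x$ in $s$ and integrating by parts in $x$ (all boundary terms vanish since $\rho$ has compact support and $\partial_x u_s\in L^2$) gives
\begin{equation*}
\fder{s}\ent(u_s) = \int_\R [\nabla_z f(\partial_x u_s,u_s) - \partial_x\nabla_p f(\partial_x u_s,u_s)]^\tT\,\partial_s u_s\dd x.
\end{equation*}
Substituting $\partial_s u_s = \alpha\partial_x^2 u_s + \partial_x(\M(u_s)\partial_x\rho)$ splits the integral into two pieces: the $\alpha$-piece, after one further integration by parts arranged exactly as in the proof of Proposition~\ref{prop:minmovds4}, reproduces (with a minus sign) the nonnegative quadratic form $\int_\R [\partial_x^2 u_s,\partial_x u_s]^\tT\,\nabla^2_{(p,z)}f\,[\partial_x^2 u_s,\partial_x u_s]\dd x$, and the $\rho$-piece equals exactly $\int_\R \nonl(u_s)[\rho]\dd x$. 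Evaluating at $s=0^+$, discarding the nonnegative $\alpha$-contribution on the favourable side, and inserting into the flow interchange inequality rearranges into the per-step estimate
\begin{equation*}
\int_\R \rho^\tT(u_\tau^k - u_\tau^{k-1})\dd x - \tau\!\int_\R \nonl(u_\tau^k)[\rho]\dd x \le \alpha\big[\calH(u_\tau^{k-1}) - \calH(u_\tau^k)\big] + \tfrac{-\lambda}{2}\WM{u_\tau^k}{u_\tau^{k-1}}^2.
\end{equation*}
Running the same argument with $\rho$ replaced by $-\rho$ (the $\lambda$-contraction constant in Theorem~\ref{thm:HV}(b) depends only on $\M$ and on the size of $\rho$, hence is unaffected) yields the mirror bound, so together they control the absolute value of the left-hand side.

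The remainder is purely combinatorial. Multiply the per-step bound by $|\psi_\tau(k\tau)|\le\|\psi\|_{C^0}$ and sum over $k\ge 1$: since $\psi\in C^\infty_c((0,T))$ has $\psi_\tau(0)=0$ and vanishes outside finitely many indices, a discrete Abel summation converts $\sum_k \psi_\tau(k\tau)\int_\R\rho^\tT(u_\tau^k-u_\tau^{k-1})\dd x$ into $-\int_0^\infty\frac{\psi_\tau(t+\tau)-\psi_\tau(t)}{\tau}\int_\R\rho^\tT u_\tau(t)\dd x\dd t$, while the $\nonl$-term assembles directly into $\int_0^\infty\psi_\tau(t)\int_\R\nonl(u_\tau(t))[\rho]\dd x\dd t$, identifying the integrand of \eqref{eq:dweak} up to an overall sign. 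On the right, the $\calH$-differences telescope to $\alpha[\calH(u^0)-\calH(u_\tau^K)]$ for the last index $K$ with $\psi_\tau(K\tau)\neq 0$; in case (A) the lower bound \eqref{eq:Hmom} combined with the moment estimate $|\mom{u_\tau^K-\zref}|\le M(1+T)$ from Proposition~\ref{prp:apriori1}(d) (absorbing the H\"older power $\alpha<1$ into a renamed constant $M$) bounds this by $\alpha[\calH(u^0)+M(1+T)]$, and in case (B) the same expression even dominates $\alpha\calH(u^0)$ trivially since $\calH\ge 0$. Finally, Proposition~\ref{prp:apriori1}(b) bounds the Wasserstein sum $\sum_k \WM{u_\tau^k}{u_\tau^{k-1}}^2$ by $2\tau\ent(u^0)$ (using $\inf\ent=0$, which follows from $f\ge 0$, a consequence of the normalization and strict convexity in Assumption~\ref{def:E}), producing the advertised factor $(-\lambda)\tau\ent(u^0)$. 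Putting all pieces together reproduces \eqref{eq:dweak}. The most delicate step throughout is the dissipation computation itself: the chain-rule differentiation of $\ent$ along $u_s$ and the two integrations by parts are legitimate only because of the $H^2$-regularity of $u_\tau^k$ from Proposition~\ref{prop:minmovds4} together with the classical smoothness of the viscous flow recorded in Theorem~\ref{thm:HV}(b); every subsequent manipulation is routine bookkeeping.
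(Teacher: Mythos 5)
Your proposal follows the same overall strategy as the paper's proof — flow interchange with the regularized potential energy $\mathcal{V}$, explicit differentiation of $\ent$ along the viscous flow, extraction of the $\nonl$-term, sign-flip via $\rho\mapsto-\rho$, multiplication by $|\psi_\tau^k|$, Abel summation, telescoping of $\calH$, and Proposition~\ref{prp:apriori1}(b) for the Wasserstein sum. The combinatorial bookkeeping, the use of the moment bound from Proposition~\ref{prp:apriori1}(d), and the observation that $\inf\ent\ge 0$ (so that $\ent(u^0)-\inf\ent\le\ent(u^0)$) are all correct.

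However, there is a genuine gap in the step you describe as ``Evaluating at $s=0^+$, discarding the nonnegative $\alpha$-contribution on the favourable side,'' and the difficulty is not where you locate it in your closing remark. Theorem~\ref{thm:HV}(b) only provides continuity of $s\mapsto u_s-\zref$ in $H^1(\R)$ down to $s=0$; the dissipation integrand contains $\partial_x^2 u_s$ and $\nabla_{pp}f(\partial_xu_s,u_s)\partial_x^2u_s$, for which you have no a priori convergence as $s\searrow 0$. The $H^2$-regularity of $u_\tau^k$ from Proposition~\ref{prop:minmovds4} does not by itself propagate to a uniform $H^2$-bound on $u_s$ for small $s$, nor does the ``classical smoothness of the viscous flow'' (which only holds for $s>0$) supply one. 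The paper closes this gap with a bootstrap: first a crude lower bound on $-\frac{\dd}{\dd s}\ent(u_s)$ of the form $\tfrac12\alpha\underline{C}_f\|\partial_x^2u_s\|_{L^2}^2-K(1+\|u_s-\zref\|_{H^1}^2)$ is derived, then substituted back into the flow interchange inequality itself, which — combined with $H^1$-continuity of $s\mapsto u_s-\zref$ — forces $\|\partial_x^2u_s\|_{L^2}$ to remain bounded as $s\searrow 0$. From this one extracts weak $H^2$ convergence and hence strong $W^{1,\infty}$ convergence of $u_s-\zref$, which together with the boundedness of $\nabla_{pp}f,\nabla_{pz}f$ and the global Lipschitz continuity of $\nabla_zf$ from Assumption~\ref{def:E} lets each factor of $Z_s=\partial_x\nabla_pf_s-\nabla_zf_s$ and of $\partial_x(\M(u_s)\partial_x\rho)$ pass to the limit in the appropriate weak/strong topology. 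Only after this can the inequality $\dff^\pot\ent(u_\tau^k)\ge -\int_\R\nonl(u_\tau^k)[\rho]\dd x$ be asserted. Your proposal omits this entire argument, which is the analytic heart of the lemma; without it, ``evaluating at $s=0^+$'' is not justified.
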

\begin{proof}
  Since by Theorem \ref{thm:HV}(b) the regularized potential energy $\mathcal{V}$ induces a $\lambda$-flow on $(\measm,\W_\M)$, 
  we may apply the flow interchange lemma (Theorem \ref{thm:flowinterchange}) to obtain for all $k\in\N$:
  \begin{align}
    \label{eq:potflowint}
    \pot( u_\tau^k)+\tau \dff^\pot\ent( u_\tau^k)+\frac{\lambda}{2}\W_\M^2( u_\tau^k, u_\tau^{k-1})&\le \pot( u_\tau^{k-1}).
  \end{align}
  Recall that the flow $u_s:=\flow^\pot_s( u_\tau^k)$ generated by $\mathcal{V}$ 
  satisfies the viscous nonlinear continuity equation \eqref{eq:viscous},
  is such that $u_s(x)$ is smooth with respect to $(s,x)\in\R_{>0}\times\R$,
  and $u_s-\zref\to u_\tau^k-\zref$ in $H^1(\R)$ as $s\searrow0$.
  The corresponding dissipation can thus be calculated explicitly,
  using
  \[ f_s:=f(\partial_xu_s,u_s), \quad \nabla_pf_s:=\nabla_pf(\partial_xu_s,u_s), \quad \text{etc.}, \]
  for brevity:
  \begin{align*}
    -\frac{\dd}{\dd s}\ent(u_s)
    &=-\int_\R\big[\nabla_p f_s \partial_x\partial_su_s+\nabla_z f_s\partial_su_s\big]\dd x \\
    &=\int_\R\big[\alpha\partial_x^2u_s+\partial_x\big(\M(u_s)\partial_x\rho\big)\big]^\tT
    \big[\partial_x\nabla_pf_s-\nabla_zf_s\big]\dd x \\
    &=\alpha\int_\R{\partial_x^2u_s\choose\partial_xu_s}^\tT\nabla_{(p,z)}^2f_s{\partial_x^2u_s\choose\partial_xu_s}\dd x
    -\int_\R \nonl(u_s)[\rho]\dd x.
  \end{align*}
  Thanks to the coercivity estimate from Assumption \ref{def:E},
  we obtain
  \begin{align}
    \label{eq:passtolimithere}
    -\frac{\dd}{\dd s}\ent(u_s)
    \ge\alpha\underline{C}_f\int_\R|\partial_x^2u|^2\dd x-\int_\R \nonl(u_s)[\rho]\dd x.
  \end{align}
  We wish to pass to the limit $s\searrow0$ in order to produce 
  a sensible lower bound on $\dff^\pot\ent( u_\tau^k)$ in \eqref{eq:potflowint} above;
  this step is a little delicate.
  First, let us consider
  \begin{align}
    \label{eq:Zs}
    Z_s := \partial_x\nabla_pf_s-\nabla_zf_s
    = \nabla_{pp}f_s\partial_x^2u_s + \nabla_{pz}f_s\partial_xu_s-\nabla_zf_s.
  \end{align}
  From Assumption \ref{def:E}, we know 
  that the functions $\nabla_{pp}f$ and $\nabla_{pz}f$ are bounded by $\bar C_f$ on $\R^n\times S$, 
  and it easily follows that the function $\nabla_zf$ is globally Lipschitz, with Lipschitz constant $\bar C_f$.
  Therefore,
  \begin{align*}
    \|Z_s\|_{L^2}
    &\le\|\nabla_{pp}f_s\partial_x^2u_s\|_{L^2}+\|\nabla_{pz}f_s\partial_xu_s\|_{L^2}+\|\nabla_zf_s\|_{L^2} \\
    &\le \bar C_f\big(\|\partial_x^2u_s\|_{L^2} + \|\partial_xu_s\|_{L^2} + \|u_s-\bar z\|_{H^1}\big).
  \end{align*}
  Further, thanks to the smoothness of the mobilities $\mob_j$, see Theorem \ref{thm:propW},
  there is an appropriate constant $C_\M$ such that
  \begin{align*}
    \big\|\partial_x\big(\M(u_s)\partial_x\rho\big)\big\|_{L^2}\le C_\M\|\rho\|_{H^2}(1+\|\partial_xu_s\|_{L^2}).
  \end{align*}
  Note that $\rho\in H^2(\R)$ since $\rho$ is smooth and of compact support.
  This yields a rough lower bound on the dissipation:
  \begin{align*}
    &-\frac{\dd}{\dd s}\ent(u_s) \\
    &\ge\alpha\underline{C}_f\|\partial_x^2u\|_{L^2}^2 
    - C_\M\bar C_f\|\rho\|_{H^2}\big(1+\|u_s-\zref\|_{H^1}\big)\big(\|\partial_x^2u_s\|_{L^2}+2\|u_s-\zref\|_{H^1}\big)\\
    &\ge \frac12\alpha\underline{C}_f\|\partial_x^2u\|_{L^2}^2 - K(1+\|u_s-\zref\|_{H^1}^2),
  \end{align*}
  for some constant $K$ depending on $\alpha$, $\underline C_f$, $\bar C_f$, $C_\M$, and $\|\rho\|_{H^2}$.
  From here, 
  the continuity of $s\mapsto u_s-\zref$ in $H^1(\R)$, 
  and the flow interchange estimate \eqref{eq:potflowint} 
  imply that $\|\partial_x^2u\|_{L^2}$ remains bounded as $s\searrow0$.
  Therefore, in addition to the strong convergence $u_s-\zref\to u_\tau^k-\zref$ in $H^1(\R)$,
  we have the respective convergence also weakly in $H^2(\R)$, and strongly in $W^{1,\infty}(\R)$.

  This is sufficient to pass to the limit $s\searrow 0$ in \eqref{eq:passtolimithere}.
  Using the aforementioned strong convergence in $W^{1,\infty}$,
  it follows that
  \begin{align*}
    \nabla_{pp}f_s=\nabla_{pp}f(\partial_xu_s,u_s) \to \nabla_{pp}f(\partial_xu_\tau^k,u_\tau^k) \quad \text{uniformly on $\R$},
  \end{align*}
  and likewise for $\nabla_{pz}f_s$ and $\nabla_zf_s$.
  For $Z_s$ from \eqref{eq:Zs} this yields, in combination with the weak convergence of $u_s-\zref$ in $H^2(\R)$,
  the weak convergence
  \begin{align*}
    Z_s \rightharpoonup 
    &\nabla_{pp}f(\partial_xu_\tau^k,u_\tau^k)\partial_x^2u + \nabla_{pz}f(\partial_xu_\tau^k,u_\tau^k)\partial_xu-\nabla_zf(\partial_xu_\tau^k,u_\tau^k)\\
    & =\partial_x\nabla_pf(\partial_xu_\tau^k,u_\tau^k)-\nabla_zf(\partial_xu_\tau^k,u_\tau^k)
  \end{align*}
  in $L^2(\R)$.
  Finally, recall that $\rho$ is smooth and of compact support,
  therefore the strong convergence of $u_s-\zref$ in $H^1(\R)$ is more than sufficient to conclude that
  \begin{align*}
    \partial_x\big(\M(u_s)\partial_x\rho\big) \to  \partial_x\big(\M(u_\tau^k)\partial_x\rho\big)
  \end{align*}
  in $L^2(\R)$.
  Hence, using the lower semi-continuity of the $L^2$-norm under weak convergence,
  we end up with
  \begin{align*}
    \dff^\pot\ent( u_\tau^k)
    &=\limsup_{s\searrow0}\left[-\frac{\dd}{\dd s}\ent(u_s)\right]\\
    &\ge\int_\R \partial_x\big(\M(u_\tau^k)\partial_x\rho\big)^\tT 
    \big[\partial_x\nabla_pf(\partial_xu_\tau^k,u_\tau^k)-\nabla_zf(\partial_xu_\tau^k,u_\tau^k)\big]\dd x\\
    &=-\int_\R\nonl(u_\tau^k)[\rho]\dd x.
  \end{align*}
  Now insert this into \eqref{eq:potflowint}:
  \begin{align}
    \label{eq:weakform1}
    \begin{split}
      &\tau\int_\R\left[\rho^\tT\frac{u_\tau^k-u_\tau^{k-1}}{\tau} - \nonl(u_\tau^k)[\rho]\right]\dd x\\
      &\le \alpha\big(\calH(u_\tau^{k-1})-\calH(u_\tau^k)\big) + \frac{(-\lambda)}2\W_\M(u_\tau^k,u_\tau^{k-1})^2.      
    \end{split}
  \end{align}
  Each step in the derivation of \eqref{eq:weakform1} remains valid 
  upon replacing consistenly $\rho$ by $-\rho$ everywhere.
  Thus, the right-hand side in \eqref{eq:weakform1} is even a bound 
  on the \emph{absolute value} of the expression on the left-hand side.
  Next, we multiply the resulting inequality by the absolute value of $\psi_\tau^k:=\psi((k-1)\tau)$;
  since clearly $|\psi_\tau^k|\le\|\psi\|_{C^0}$, we obtain
  \begin{align}
    \label{eq:weakform2}
    \begin{split}
      &\left|\tau\int_\R\left[-\psi_\tau^k\rho^\tT\frac{u_\tau^k-u_\tau^{k-1}}{\tau}+ \psi_\tau^k\nonl(u_\tau^k)[\rho]\right]\dd x\right|\\
      &\le \|\psi\|_{C^0}\left[\alpha\big(\calH(u_\tau^{k-1})-\calH(u_\tau^k)\big) + \frac{(-\lambda)}2\W_\M(u_\tau^k,u_\tau^{k-1})^2\right].      
    \end{split}
  \end{align}
  Now we sum over $k=1,2,\ldots,N_\tau$,
  where $N_\tau$ is the smallest integer with $N_\tau\tau>T$;
  notice that on the left-hand side, we can actually sum to infinity,
  since $\psi(t)=0$ for all $t\ge T$.
  We apply the triangle inequality to the left-hand side,
  observe that
  \begin{align*}
    -\tau\sum_{k=1}^\infty \int_\R\rho^\tT\frac{u_\tau^k-u_\tau^{k-1}}{\tau}\psi_\tau^k \dd x
    = \tau\sum_{k=0}^\infty\int_\R \frac{\psi_\tau^{k+1}-\psi_\tau^k}\rho^\tT\tau u_\tau^k\dd x,
  \end{align*}
  and write the time summation over $n$ as an integral with respect to time, 
  ending up with
  \begin{align*}
    \begin{split}
      &\left|\int_0^\infty\int_\R
        \bigg[\frac{\psi_\tau(t+\tau)-\psi_\tau(\tau)}{\tau}\rho^\tT u_\tau(t) + \psi(t)\nonl\big(u_\tau(t)\big)[\rho]\bigg]\dd x\dd t\right|\\
      &\le \|\psi\|_{C^0}\left[\alpha\big(\calH(u_\tau^0)-\calH(u_\tau(N_\tau\tau))\big) + \frac{(-\lambda)}2\sum_{k=1}^\infty\W_\M(u_\tau^k,u_\tau^{k-1})^2\right].      
    \end{split}
  \end{align*}
  For simplification of the right-hand side, we use 
  the lower bound on $\calH$ from Theorem \ref{thm:HV} 
  in combination with the a priori estimate \ref{prp:apriori1}(d),
  and the energy estimate in Proposition \ref{prp:apriori1}(b).
  This proves the claim.
\end{proof}
\section{Passage to the continuous-time limit}
The proof of Theorem \ref{thm:exist4} can now be completed by passing to the \break continuous-time limit $\tau\searrow 0$. 
Using the a priori estimates from Propositions \ref{prp:apriori1} and \ref{prp:apriori2}, 
we first pass to a strong limit $u_\tau\to u$. 
The remainder of the proof then is concerned with obtaining the continuous weak formulation \eqref{eq:cweak} of system \eqref{eq:equation4} 
from the discrete weak formulation \eqref{eq:dweak} and the convergence properties of $(u_\tau)_{\tau>0}$. 
In summary, we have:
\begin{prop}[Continuous-time limit]\label{prop:ctl4}
  Let $T>0$ be given, $(\tau_k)_{k\in\N}$ be a vanishing sequence of step sizes, i.e. $\tau_k\searrow 0$ as $k\to\infty$, 
  and $( u_{\tau_k})_{k\in\N}$ be the corresponding sequence of discrete solutions obtained by the minimizing movement scheme \eqref{eq:mms}. 
  Then, there exists a (non-relabelled) subsequence and a limit curve $ u:\,[0,T]\to\Xaux$ such that as $k\to\infty$:
  \begin{enumerate}[(a)]
  \item $u_{\tau_k}(t)$ converges weakly$\ast$ to $u(t)$, pointwise with respect to $t\in[0,T]$;
  \item $ u_{\tau_k}-\zref\rightharpoonup  u-\zref$ weakly in $L^2([0,T];H^2(\R;\R^n))$;
  \item $ u_{\tau_k}-\zref\rightarrow  u-\zref$ strongly in $L^2([0,T];H^1_\loc(\R;\R^n))$,
  \end{enumerate}
  with the properties
  \begin{align*}
    u(0,\cdot)&=u^0,\\
    u&\in C^{1/2}([0,T];(\measm,\W_\M)),\\
    u-\zref&\in L^\infty([0,T];H^1(\R;\R^n)).
  \end{align*}
  Moreover, the limit $u$ is a weak solution to \eqref{eq:equation4} in the sense of Definition \ref{def:weaksol}.
\end{prop}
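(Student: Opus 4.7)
The plan is the standard route for minimizing-movement schemes: extract from $(u_{\tau_k})$ a subsequence converging in suitable topologies, identify the regularity of the limit $u$, and then pass to the limit in the discrete weak formulation \eqref{eq:dweak}, sending the regularization parameter $\alpha\searrow0$ at the end. For the extraction, Proposition \ref{prp:apriori1}(c) gives a $\W_\M$-H\"older equicontinuity of $(u_{\tau_k})$ uniform in $k$, and combined with the weak$\ast$ relative compactness of $\W_\M$-bounded sets (Theorem \ref{thm:propW}(b)), an Arzel\`a--Ascoli-type argument on a countable dense subset of $[0,T]$ produces a pointwise weak$\ast$ limit $u(t)$ at every $t\in[0,T]$, which is item (a). The H\"older bound transfers to $u$ via the lower semicontinuity in Theorem \ref{thm:propW}(c), so $u\in C^{1/2}([0,T];(\measm,\W_\M))$ and the initial condition $u(0,\cdot)=u^0$ comes for free. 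The uniform bounds of Proposition \ref{prp:apriori2} then supply, via Banach--Alaoglu, the weak convergence in $L^2([0,T];H^2)$ of item (b) and the weak$\ast$ convergence in $L^\infty([0,T];H^1)$.

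For item (c), I would invoke the generalized Aubin--Lions Theorem \ref{thm:ex_aub} with $\ban=L^2_\loc(\R;\R^n)$, with $\mathcal{A}$ essentially $\|\cdot-\zref\|_{H^2}^2$ augmented by a localization that turns sublevels into relatively compact subsets of $\ban$ through the compact embedding $H^2_\loc\hookrightarrow H^1_\loc$, and with $\W=\W_\M$. Hypothesis \eqref{eq:hypo1} is Proposition \ref{prp:apriori2}(b); hypothesis \eqref{eq:hypo2} follows from Proposition \ref{prp:apriori1}(c) via $\int_0^{T-h}\W_\M(u_{\tau_k}(t+h),u_{\tau_k}(t))^2\dd t\lesssim h+\tau_k$; and the separation property on sublevels of $\mathcal{A}$ holds because $\W_\M$-convergence entails weak$\ast$ convergence. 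This yields strong $L^2([0,T];H^1_\loc)$ convergence and, up to a further subsequence, a.e.\ pointwise convergence of $(\partial_xu_{\tau_k},u_{\tau_k})$ on $[0,T]\times\R$.

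With these convergences at hand, the passage to the limit in \eqref{eq:dweak} is then routine. The discrete temporal difference of $\psi_\tau$ converges uniformly to $\partial_t\psi$, so the linear term converges against the weak$\ast$ limit $u$. In the nonlinear term $\nonl(u_\tau)[\rho]$, the factor $\partial_x(\M(u_\tau)\partial_x\rho)$ converges strongly in $L^2$ (thanks to compact support of $\rho$, smoothness of $\M$, and strong $H^1_\loc$ convergence of $u_\tau$), while the bracket $\nabla_zf(\partial_xu_\tau,u_\tau)-\nabla^2_{pp}f\,\partial_x^2u_\tau-\nabla^2_{pz}f\,\partial_xu_\tau$ converges weakly in $L^2_\loc$: the coefficients $\nabla_zf,\nabla^2_{pp}f,\nabla^2_{pz}f$ evaluated along the sequence converge strongly in every $L^p_\loc$ (by a.e.\ pointwise convergence, smoothness of $f$, compactness of $S$, and the $L^\infty(H^1)$-bound), whereas $\partial_x^2u_\tau\rightharpoonup\partial_x^2u$ weakly in $L^2$; the product of strong and weak convergence identifies the limit. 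The term $(-\lambda)\tau\ent(u^0)$ on the right of \eqref{eq:dweak} vanishes with $\tau\searrow0$, and sending $\alpha\searrow 0$ afterwards eliminates the remaining bound $\|\psi\|_{C^0}\alpha(\calH(u^0)+M(1+T))$; density of tensor products in $C_c^\infty(\R_{>0}\times\R;\R^n)$ then gives \eqref{eq:cweak} for arbitrary $\varphi$.

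The step I expect to be the main obstacle is the strong compactness in $L^2([0,T];H^1_\loc)$: in contrast with gradient flows in Hilbert-type Wasserstein settings, the intrinsic metric $\W_\M$ does not come with a natural linear dual, so one cannot control a time derivative of $u_\tau$ in a negative Sobolev norm and the classical Aubin--Lions lemma is unavailable. Theorem \ref{thm:ex_aub} sidesteps this by exploiting the $\W_\M$-equicontinuity directly, but verifying its hypotheses in the present setting --- in particular the separation property on sublevels and the preservation of the value-space constraint $u\in S$ a.e.\ in the limit --- relies crucially on the fully decoupled structure of $\M$ and on the conditions (H1)--(H3); once compactness is secured, the smoothness of $f$ and $\M$ together with the boundedness of $S$ make the identification of the nonlinearity essentially automatic.
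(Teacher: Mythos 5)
Your overall strategy matches the paper's and most of the steps are sound: the refined Arzel\`a--Ascoli extraction for (a) with the transfer of H\"older regularity and the initial condition, the Banach--Alaoglu argument for (b), the product decomposition of $\nonl(u_{\tau})[\rho]$ into a strongly and a weakly convergent factor, and the elimination of the $\alpha$-dependent remainder (the paper does this with the diagonal choice $\alpha_k=\sqrt{\tau_k}$, but your two-step ``$\tau\to 0$ then $\alpha\to 0$'' works just as well, since the left-hand side of \eqref{eq:dweak} is $\alpha$-independent).

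The gap is in the verification of hypothesis \eqref{eq:hypo2} for Theorem~\ref{thm:ex_aub}. The bound you write, $\int_0^{T-h}\W_\M(u_{\tau_k}(t+h),u_{\tau_k}(t))^2\dd t\lesssim h+\tau_k$, is what comes out of naively inserting Proposition~\ref{prp:apriori1}(c), but it does \emph{not} close the argument: the hypothesis requires $\lim_{h\searrow 0}\sup_{k}(\cdots)=0$, whereas $\sup_k(h+\tau_k)=h+\sup_k\tau_k\to\sup_k\tau_k>0$. To get a bound that goes to zero \emph{uniformly} in $k$, one must split into cases. For $h\le\tau_k$ the piecewise constant interpolant makes $\W_\M(u_{\tau_k}(t+h),u_{\tau_k}(t))$ nonzero only on intervals of length $h$ around the jump times, so the time integral collapses to a finite sum $h\sum_i\W_\M(u_{\tau_k}^i,u_{\tau_k}^{i+1})$, which by Cauchy--Schwarz and the total square variation estimate of Proposition~\ref{prp:apriori1}(b) is $\lesssim h\cdot h^{-1/2}\cdot\sqrt{h}\lesssim\sqrt{h}$ (after accounting for the number $\sim T/\tau_k$ of jumps, the $\tau_k$'s cancel). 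For $h>\tau_k$ one uses Proposition~\ref{prp:apriori1}(c) directly. Both give a bound $\lesssim\sqrt{h}$ \emph{uniformly in $k$}, which is what the theorem requires; the ``$+\tau_k$'' in your estimate is the symptom of having skipped this case distinction.

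A second, more cosmetic issue: $\ban=L^2_\loc(\R;\R^n)$ is a Fr\'echet space, not a Banach space, so Theorem~\ref{thm:ex_aub} does not apply to it directly, and moreover $\W_\M$ is a distance on functions defined on all of $\R$, not on restrictions. The clean fix, as in the paper, is to fix a bounded interval $I$, take $\ban$ to be (an isometric copy inside) $H^1(I;\R^n)$, define $\W$ as an infimum of $\W_\M$ over all extensions from $I$ to $\R$ lying in a fixed $\W_\M$-ball, and then pass to general bounded intervals via a diagonal argument. You correctly flag this localization issue as a concern but do not actually resolve it.
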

\begin{proof}
  For the convergence in (a), we use the refined version of the Arzel\`{a}-Ascoli theorem \cite[Thm. 3.3.1]{savare2008},
  which also provides the claimed H\"older continuity of $u$ with respect to $\W_\M$.
  The cited theorem is applicable thanks to the $\tau$-uniform quasi-H\"older continuity of the curves $u_\tau$ in the sense of Proposition \ref{prp:apriori1}(c)
  and the fact that the set of $u\in\measm$ at finite distance to the initial datum $u^0$ is a complete metric space with respect to $\W_\M$,
  see Theorem \ref{thm:propW}.
  In case (A), thanks to the $\tau$-uniform bound on the second momenta of the $u_\tau$, see Proposition \ref{prp:apriori1}(d),
  it follows that the second momenta of the limit $u$ obey the same bound,
  and moreover, by Prokhorov's theorem, the masses $\mss{u_\tau-\zref}$ are preserved in the limit $\tau\searrow0$.
  Thus, $u$ is a curve in $\Xaux$.

  Convergence (b) is an immediate consequences of the a priori estimate in Proposition \ref{prp:apriori2}(b) and Alaoglu's theorem.
  Further, from the time-monotonicity of the free energy $\ent$, see Proposition \ref{prp:apriori1}(a), 
  and the coercivity and lower semicontinuity of $\ent$ stated in Proposition \ref{prop:propE}, we infer $u-\zref\in L^\infty([0,T];H^1(\R;\R^n))$.

  The next step is to prove the strong convergence asserted in (c) using Theorem \ref{thm:ex_aub}. 
  Fix a bounded interval $I\subset\R$ and define
  \begin{align*}
    \ban&:=\{w\in\mathscr{M}(I;S):\,w-\zref\in H^1(I;\R^n)\},
  \end{align*}
  endowed with $\|w\|_\ban:=\|w-\zref\|_{H^1(I)}$, which is isometric to a closed subset of $H^1(I;\R^n)$ and the map
  \begin{align*}
    \mathcal{A}(u):=\begin{cases}\|u-\zref\|_{H^2(I)}^2&\text{if }u-\zref\in H^2(I;\R^n),\\ +\infty&\text{otherwise},\end{cases}
  \end{align*}
  which has relatively compact sublevels in $\ban$ due to Rellich's theorem. Furthermore, define the pseudo-distance $\W$ as
  \begin{align*}
    \W(w,\tilde w)&:=\inf\bigg\{\bigg.\W_\M(w',\tilde w'):~w',\tilde w'\in\measm,\\&\quad\W_\M(w',u^0)\le W,~\W_\M(\tilde w',u^0)\le W,~w'|_I=w,~\tilde w'|_I=\tilde w\bigg.\bigg\},
  \end{align*}
  where $W>0$ is the constant for which one has
  \begin{align*}
    \W_\M(u_\tau(t),u^0)&\le W,
  \end{align*}
  uniformly in $t\in[0,T]$ and $\tau\in(0,\bar\tau]$, recall Proposition \ref{prp:apriori1}(b). 
  Thanks to the properties of $\W_\M$ (cf. Theorem \ref{thm:propW}), 
  one easily sees that finiteness of the infimum above yields the existence of a minimizer, 
  and that the requirements for $\W$ of Theorem \ref{thm:ex_aub} are fulfilled. 
  We verify the hypotheses \eqref{eq:hypo1}\&\eqref{eq:hypo2} for the sequence $(u_k)_{k\in\N}$ defined by $u_k:=u_{\tau_k}|_{[0,\infty)\times I}$: 
  \eqref{eq:hypo1} is immediate because of the a priori estimate from Proposition \ref{prp:apriori2}(b). 
  For \eqref{eq:hypo2}, we first notice that 
  \begin{align*}
    \W(u_k(t+h),u_k(t))&\le \W_\M( u_{\tau_k}(t+h), u_{\tau_k}(t))
  \end{align*}
  by construction of $\W$.
  We claim that
  \begin{align}
    \label{eq:hypo2ver}
    \begin{split}
      &\sup_{k\in\N}\int_0^{T-h}\W_\M( u_{\tau_k}(t+h), u_{\tau_k}(t))\dd t\\&\le \max\left(1,\sqrt{T+\bar\tau}\right)\sqrt{2(\ent( u^0)-\inf\ent)(T+\bar \tau)h},
    \end{split}\end{align}
  for all $h\in(0,\bar\tau)$, from which \eqref{eq:hypo2} follows. 
  Indeed, let us fix $k\in\N$, and consider two cases. 
  If $h\in (0,\tau_k]$, then
  \begin{align*}
    \int_0^{T-h}\W_\M( u_{\tau_k}(t+h), u_{\tau_k}(t))\dd t
    &=\sum_{i=1}^{\gau{\frac{T}{\tau_k}}}h\W_\M( u_{\tau_k}^i, u_{\tau_k}^{i+1})\\
    &\le \sqrt{2(\ent( u^0)-\inf\ent)}\sqrt{h^2\gau{\frac{T}{\tau_k}}}\\
    &\le \sqrt{2(\ent( u^0)-\inf\ent)(T+\bar\tau)h},
  \end{align*}
  thanks to H\"older's inequality and Proposition \ref{prp:apriori1}(b). 
  If instead $h\in(\tau_k,\bar\tau]$, 
  then we directly get from Proposition \ref{prp:apriori1}(c):
  \begin{align*}
    \int_0^{T-h}\W_\M( u_{\tau_k}(t+h), u_{\tau_k}(t))\dd t
    &\le (T-h)\sqrt{2(\ent( u^0)-\inf\ent)h}\\
    &\le (T+\bar\tau)\sqrt{2(\ent( u^0)-\inf\ent)h}.
  \end{align*}
  Hence, \eqref{eq:hypo2ver} holds 
  and the application of Theorem \ref{thm:ex_aub} yields the existence of a (non-relabelled) subsequence 
  which converges in $H^1(I;\R^n)$ to (the spatial restriction to $I$ of) $ u$ in measure w.r.t. $t\in (0,T)$. 
  Employing the estimate from Proposition \ref{prp:apriori2}(a) and the dominated convergence theorem, 
  we conclude that
  \begin{align*}
    u_{\tau_k}-\zref\rightarrow  u-\zref \text{ strongly in }L^2([0,T];H^1(I;\R^n)),
  \end{align*}
  proving claim (c) for a prescribed interval $I$. 
  By a diagonal argument, setting $I_R:=[-R,R]$ and letting $R\nearrow \infty$, 
  we deduce that (c) is true simultaneously for every bounded interval $I$, extracting a further subsequence. 
  Moreover, extracting a further subsequence if necessary, $ u_{\tau_k}$ converges to $ u$ almost everywhere in $[0,T]\times \R$.

  It now remains to prove that the limit map $u$ fulfills the weak formulation \eqref{eq:cweak}. 
  Fix $\rho\in C^\infty_c(\R;\R^n)$ and $\psi\in C^\infty_c((0,T))$,
  and apply Lemma \ref{lemma:dweak} about the discrete weak formulation.
  Choosing $\alpha_k:=\sqrt{\tau_k}$ for each $k\in\N$,
  we obtain from \eqref{eq:dweak}:
  \begin{align*}
    &\Bigg|\Bigg.\int_0^\infty \int_\R \bigg[\bigg.\frac{\psi_{\tau_k}(t+\tau_k)-\psi_{\tau_k}(t)}{\tau_k}\rho^\tT u_{\tau_k}(t)
    +\psi_{\tau_k}(t) \nonl\big(u_{\tau_k}(t)\big)[\rho]\bigg.\bigg]\dd x\dd t\Bigg.\Bigg|\nonumber\\
    &\le\|\psi\|_{C^0}\left(\sqrt{\tau_k}[\calH(u^0)+M(1+T)]+C\tau_k\left(1+\frac1{\sqrt{\tau_k}}\right)\ent(u^0)\right)
    \le C'\sqrt{\tau_k},
  \end{align*}
  with a constant $C'$ that is independent of $\tau_k$,
  and thus the absolute value on the left-hand side converges to zero as $\tau_k\to0$.
  We wish to identify the limit inside the absolute value with the left-hand side of \eqref{eq:cweak},
  with $\varphi(t,x)=\psi(t)\rho(x)$.

  For the first integral, it suffices to observe that
  \begin{align*}
    \frac{\psi_{\tau_k}(t+\tau_k)-\psi_{\tau_k}(t)}{\tau_k} \to \partial_t\psi(t)
  \end{align*}
  uniformly with respect to $t\in[0,T]$, thanks to the smoothness of $\psi$.
  Indeed, since $\rho$ is smooth and of compact support, 
  we trivially have from the strong convergence of $u_{\tau_k}$ in $L^2([0,T];H^1_\loc(\R;\R^n))$ 
  that $\rho^\tT u_{\tau_k}\to \rho^\tT u$ in $L^2([0,T]\times\R;\R^n)$,
  and hence
  \begin{align*}
    \int_0^\infty \int_\R \frac{\psi_{\tau_k}(t+\tau_k)-\psi_{\tau_k}(t)}{\tau_k}\rho^\tT u_{\tau_k}(t)\dd x\dd t
    \to \int_0^\infty\int_\R \partial_t\psi(t)\rho^\tT u(t)\dd x\dd t.
  \end{align*}
  To show the convergence of the integal of $\nonl(u_{\tau_k})[\rho]$ to the one of $\nonl(u)[\rho]$,
  we recall that
  \begin{align*}
    \nonl(u_{\tau_k})[\rho] = -\partial_x\big(\M(u_{\tau_k})\partial_x\rho\big)^\tT Z_k
  \end{align*}
  with, using the abbreviations $\nabla_zf_k=\nabla_zf(\partial_xu_{\tau_k},u_{\tau_k})$ etc.,
  \begin{align*}
    Z_k= \partial_x\nabla_pf_k-\nabla_zf_k
    = \nabla_{pp}f_k\partial_x^2u_{\tau_k}+\nabla_{pz}f_k\partial_xu_{\tau_k}-\nabla_zf_k.
  \end{align*}
  Let us first discuss the convergence
  \begin{align}
    \label{eq:conv0}
    \partial_x\big(\M(u_{\tau_k})\partial_x\rho\big)\to \partial_x\big(\M(u_{\tau_k})\partial_x\rho\big) \quad\text{in $L^2([0,T]\times\R;\R^n)$}.
  \end{align}
  Since $z\mapsto\M(z)$ is a $C^2$-smooth function on the compact set $S$,
  the convergence of $u_{\tau_k}$ in $L^2([0,T];H^1_\loc(\R;\R^n))$ 
  induces convergence of $\M(u_{\tau_k})$ in the respective space $L^2([0,T];H^1_\loc(\R;\R^{n\times n}))$.
  Since $\rho$ is smooth and of compact support, we arrive at \eqref{eq:conv0}.

  Now, we study convergence of $Z_k$.
  As noted before, $(p,z)\mapsto\nabla_zf(p,z)$ is a globally Lipschitz continuous function on $\R^n\times S$,
  and both $(p,z)\mapsto\nabla_{pp}f(p,z)$ and $(p,z)\mapsto\nabla_{pz}f(p,z)$ are globally bounded.
  Hence, convergence of $u_{\tau_k}$ in $L^2([0,T];H^1_\loc(\R;\R^n))$ 
  implies on the one hand 
  \begin{align}
    \label{eq:conv1}
    \nabla_zf_k\to\nabla_zf(\partial_xu,u)\quad \text{in $L^2([0,T];L^2_\loc(\R;\R^n))$}.
  \end{align}
  And on the other hand, it implies pointwise convergence of $u_{\tau_k}$ and $\partial_xu_{\tau_k}$ 
  almost everywhere on $[0,T]\times\R$,
  and thereby also pointwise convergence of $\nabla_{pp}f_k$ and $\nabla_{pz}f_k$ 
  to their respective limits $\nabla_{pp}f(\partial_xu,u)$ and $\nabla_{pz}f(\partial_xu,u)$, 
  almost everywhere on $[0,T]\times\R$.
  Now we can make use of the weak convergence of $u_{\tau_k}$ in $L^2([0,T];H^2(\R;\R^n))$,
  which implies weak convergence of both $\partial_xu_{\tau_k}$ and $\partial_x^2u_{\tau_k}$ in $L^2([0,T]\times\R;\R^n)$:
  thanks to Lemma \ref{lem:fncana}, this allows to conclude
  \begin{align}
    \label{eq:conv2}
    &\nabla_{pp}f_k\partial_x^2u_{\tau_k}\rightharpoonup\nabla_{pp}f(\partial_xu,u)\partial_x^2u
    \quad \text{in $L^2([0,T]\times\R;\R^n)$},\\
    \label{eq:conv3}
    &\nabla_{pz}f_k\partial_xu_{\tau_k}\rightharpoonup\nabla_{pz}f(\partial_xu,u)\partial_xu
    \quad \text{in $L^2([0,T]\times\R;\R^n)$}.    
  \end{align}
  Summation of \eqref{eq:conv1}, \eqref{eq:conv2}, and \eqref{eq:conv3} yields that
  \begin{align*}
    Z_k\rightharpoonup 
    &\nabla_{pp}f(\partial_xu,u)\partial_x^2u+\nabla_{pz}f(\partial_xu,u)\partial_xu-\nabla_zf(\partial_xu,u) \\
    &=\partial_x \nabla_pf(\partial_xu,u)-\nabla_zf(\partial_xu,u)=:Z_0
  \end{align*}
  in each space $L^2([0,T];L^2(J;\R^n))$ for a compact interval $J\subset\R$.
  Since $\rho$ has compact support, so do all the expressions $\partial_x\big(\M(u_{\tau_k})\partial_x\rho\big)$,
  and we obtain for the product 
  \begin{align*}
    &\int_0^\infty\int_\R\nonl(u_{\tau_k})[\rho]\dd x\dd t
    =-\int_0^\infty\int_\R\partial_x\big(\M(u_{\tau_k})\partial_x\rho)^\tT Z_k\dd x\dd t\\
    &\longrightarrow
    -\int_0^\infty\int_\R\partial_x\big(\M(u)\partial_x\rho)^\tT Z_0\dd x\dd t
    =\int_0^\infty\int_\R\nonl(u)[\rho]\dd x\dd t.
  \end{align*}
  In summary, we have shown that the limit $u$ of $u_{\tau_k}$ satisfies the weak formulation \eqref{eq:cweak}
  for each test function $\varphi$ of the form $\varphi(t,x)=\psi(t)\rho(x)$ 
  with $\psi\in C^\infty_c((0,T))$ and $\rho\in C^\infty_c(\R)$, where $T>0$ is arbitrary.
  By standard approximation arguments, it follows that \eqref{eq:cweak} holds indeed for all $\varphi\in C^\infty_c(\R_{>0}\times\R)$.
\end{proof}

\section{Long time asymptotics}
We shall now prove Theorem \ref{thm:long}, 
which comes more or less as a corollary to the existence proof,
specifically from the a priori estimate in Lemma \ref{lem:H2}.

Let us start by proving the second claim in Theorem \ref{thm:long}, which is easier:
since $\partial_x^2 u_{\tau_k}$ converges weakly to $\partial_x^2 u$ in $L^2([0,T];L^2(\R;\R^n))$, 
and since the $L^2$-norm of $\partial_x^2u$ is a weakly lower semicontinuous functional,
the estimates in Lemma \ref{lem:H2} pass to the limit $\tau_k\searrow0$,
and yield, respectively,
\begin{align*}
  \int_0^T \|\partial_x^2u(t)\|_{L^2}^2\dd t &\le C_q(1+T^q), \quad\text{or}\\
  \int_0^T \|\partial_x^2u(t)\|_{L^2}^2\dd t &\le C,
\end{align*}
for all $T>0$,
where $q>\frac13$ is arbitrary.
The result now follows immediately from an abstract measure theoretic argument
that we recall for convenience in Lemma \ref{lem:measuretheory}. % in the Appendix.

The first claim in Theorem \ref{thm:long} is a bit more subtle.
Fix some $q\in(\frac13,1)$ and some $\tau>0$, 
and consider again the respective estimate in Lemma \ref{lem:H2}.
Given a time $T\ge1$, one necessarily finds some $t_\tau^T\le T$ such that
\[ \|\partial_x^2u_\tau(t_\tau^T)\|_{L^2}^2\le\frac{C_q(1+T^q)}{T}\le 2C_qT^{-(1-q)}. \]
With the help of the interpolation estimates \eqref{eq:interpol1} and \eqref{eq:interpol2} 
from Lemma \ref{lem:gagliardo} in the Appendix,
and recalling \eqref{eq:exaddon},
it now follows that
\begin{align*}
  &\|u_\tau(t_\tau^T)-\zref\|_{H^1}^2 
  = \sum_{j=1}^n\left(\|\partial_xu_{\tau,j}(t_\tau^T)\|_{L^2}^2+\|u_{\tau,j}(t_\tau^T)-\zref_j\|_{L^2}^2\right) \\
  &= \sum_{j=1}^n\left(
    \|\partial_x^2u_{\tau,j}(t_\tau^T)\|_{L^2}^{\frac65}\big[\mss{u^0-\zref}\big]_j^{\frac45}
    +\|\partial_x^2u_{\tau,j}(t_\tau^T)\|_{L^2}^{\frac25}\big[\mss{u^0-\zref}\big]_j^{\frac85}
  \right) \\
  &\le K\left(\|\partial_x^2u_\tau\|_{L^2}^{\frac65}+\|\partial_x^2u_\tau\|_{L^2}^{\frac25}\right)
  \le KC_q^{\frac65}T^{-\frac25(1-q)},
\end{align*}
where the constant $K$ depends only on $\mss{u^0-\zref}$.
Combining this with the estimate on $\ent$ in Proposition \ref{prop:propE}(a) 
and the monotonicity from Proposition \ref{prp:apriori1}(a),
we obtain
\begin{align*}
  \ent(u_\tau(T)) \le \ent(u_\tau(t_\tau^T)) \le KC_q^{\frac23}T^{-\frac25(1-q)}.
\end{align*}
This estimate is true for each $\tau>0$ and any $T\ge1$.
Thanks to the lower semi-continuity of $\ent$ from Proposition \ref{prop:propE}(b),
it passes to the limit along the sequence $\tau_k\searrow0$, and thus produces \eqref{eq:decay1},
with $p=\frac25(1-q)$.
Since any $q\in(\frac13,1)$ is admissible in the calculation above,
any $p>\frac4{15}$ can be attained.

\appendix

\section{Some technical lemmas}
In this section, we collect some small technical results that are presumably well-known,
but for which we were not able to find appropriate references.
\begin{lemma}
  \label{lem:fncana}
  Let $\Omega\subset\R^d$ be some open set.
  Assume that $(f_k)$ and $(g_k)$ are bounded sequences in $L^\infty(\Omega)$ and in $L^2(\Omega)$, respectively, 
  and assume that $f_k\to f$ pointwise almost everywhere,
  and $g_k\rightharpoonup g$ weakly in $L^2(\Omega)$.
  Then $f_kg_k\rightharpoonup fg$ weakly in $L^2(\Omega)$.
\end{lemma}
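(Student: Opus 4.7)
The plan is to verify the definition of weak convergence directly: I will show that for every test function $\varphi\in L^2(\Omega)$,
\[ \int_\Omega f_kg_k\varphi\,\mathrm{d}x \longrightarrow \int_\Omega fg\varphi\,\mathrm{d}x. \]
The natural decomposition to work with is
\[ f_kg_k\varphi - fg\varphi = (f_k-f)g_k\varphi + f\,(g_k-g)\,\varphi, \]
so the task splits into controlling the two contributions separately.

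For the second term I would note that the pointwise a.e. limit $f$ of the sequence $(f_k)$ inherits the uniform bound $\|f\|_{L^\infty}\le\sup_k\|f_k\|_{L^\infty}=:M$, so $f\varphi\in L^2(\Omega)$. Hence $\int_\Omega f\varphi(g_k-g)\,\mathrm{d}x\to0$ immediately from the weak $L^2$-convergence $g_k\rightharpoonup g$.

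The first term is where the pointwise convergence is converted into strong $L^2$-convergence of the multiplier. I would observe that $(f_k-f)\varphi\to0$ pointwise a.e., and that $|(f_k-f)\varphi|\le 2M|\varphi|\in L^2(\Omega)$. Dominated convergence then gives $(f_k-f)\varphi\to0$ strongly in $L^2(\Omega)$. Combining with the uniform bound $\|g_k\|_{L^2}\le C$ and Cauchy--Schwarz,
\[ \left|\int_\Omega (f_k-f)g_k\varphi\,\mathrm{d}x\right|\le C\,\|(f_k-f)\varphi\|_{L^2}\longrightarrow0. \]

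Putting the two bounds together yields the claim. The only mildly delicate point is the interplay between pointwise convergence and $L^\infty$-boundedness in the first term — this is precisely the reason one cannot simply invoke ``weak times strong'' in a symmetric way, and why the argument must use dominated convergence on $(f_k-f)\varphi$ rather than on $(f_k-f)g_k$ directly (the latter need not even converge pointwise in a useful sense). There is no real obstacle beyond this routine bookkeeping.
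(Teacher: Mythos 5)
Your proof is correct and follows the same route as the paper: the identical decomposition $(f_k-f)g_k\varphi + f(g_k-g)\varphi$, dominated convergence to obtain strong $L^2$-convergence of $(f_k-f)\varphi$, and Cauchy--Schwarz combined with the uniform $L^2$-bound on $(g_k)$. The only differences are notational (the paper writes $\eta$ for your $\varphi$ and groups the test function with $f_k$ rather than with $g_k$).
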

\begin{proof}
  Let $\eta\in L^2(\Omega)$ be arbitrary.
  Write
  \begin{align*}
    \eta(f_kg_k-fg) = (\eta f_k-\eta f)g_k + \eta f (g_k-g).
  \end{align*}
  Thanks to the hypotheses on $(f_k)$, 
  the product $\eta f_k$ converges to $\eta f$ strongly in $L^2(\Omega)$ by dominated convergence.
  Thus
  \begin{align*}
    &\lim_{k\to\infty}\left|\int_\Omega \eta(f_kg_k-fg)\dd x\right|\\
    &\le \lim_{k\to\infty}\|\eta f_k-\eta f\|_{L^2}\|g_k\|_{L^2} + \lim_{k\to\infty}\int_\Omega\eta f(g_k-g)\dd x
    = 0.
  \end{align*}
  This is true for every $\eta\in L^2(\Omega)$, hence $f_kg_k$ goes to $fg$ weakly in $L^2(\Omega)$.
\end{proof}
\begin{lemma}
  \label{lem:gagliardo}
  For each $f\in H^2(\R)\cap L^1(\R)$, we have
  \begin{align}
    \label{eq:interpol1}
    \|f\|_{L^2} &\le \|\partial_x^2f\|_{L^2}^{\frac15}\|f\|_{L^1}^{\frac45}, \\
    \label{eq:interpol2}
    \|\partial_xf\|_{L^2} &\le \|\partial_x^2f\|_{L^2}^{\frac35}\|f\|_{L^1}^{\frac25}.
  \end{align}
\end{lemma}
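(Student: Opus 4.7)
Both bounds are one-dimensional Gagliardo--Nirenberg interpolation inequalities, and the exponents are precisely the ones forced by scale invariance: under the rescaling $f\mapsto f(\lambda\,\cdot)$, both sides of \eqref{eq:interpol1} transform as $\lambda^{-1/2}$ and both sides of \eqref{eq:interpol2} transform as $\lambda^{1/2}$. My plan is to chain three elementary one-dimensional facts, relying throughout on the fact that $f\in H^2(\R)\cap L^1(\R)$ forces both $f$ and $\partial_x f$ to tend to zero at $\pm\infty$, so that integration by parts and the fundamental theorem of calculus are available without boundary terms.

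The three ingredients are: (i) H\"older's inequality $\|f\|_{L^2}^2\le\|f\|_{L^1}\,\|f\|_{L^\infty}$; (ii) the one-dimensional Sobolev trace bound $\|f\|_{L^\infty}^2\le 2\|f\|_{L^2}\,\|\partial_x f\|_{L^2}$, obtained from the identity $f(x)^2=2\int_{-\infty}^{x}f\,\partial_x f\,\dd y$ and Cauchy--Schwarz; and (iii) the integration-by-parts estimate $\|\partial_x f\|_{L^2}^2=-\int f\,\partial_x^2 f\,\dd x\le\|f\|_{L^2}\,\|\partial_x^2 f\|_{L^2}$.

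To establish \eqref{eq:interpol1}, I first combine (i) and (ii) to get $\|f\|_{L^2}^{3/2}\le\sqrt{2}\,\|f\|_{L^1}\,\|\partial_x f\|_{L^2}^{1/2}$, and then insert the square root of (iii) to replace $\|\partial_x f\|_{L^2}^{1/2}$ by $\|f\|_{L^2}^{1/4}\,\|\partial_x^2 f\|_{L^2}^{1/4}$. Collecting the powers of $\|f\|_{L^2}$ on the left and solving yields \eqref{eq:interpol1} up to a numerical constant. The bound \eqref{eq:interpol2} then follows immediately by substituting \eqref{eq:interpol1} back into (iii): $\|\partial_x f\|_{L^2}^2\le\|f\|_{L^2}\,\|\partial_x^2 f\|_{L^2}\le C\,\|f\|_{L^1}^{4/5}\,\|\partial_x^2 f\|_{L^2}^{6/5}$, which is the square of \eqref{eq:interpol2}.

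The only real obstacle is cosmetic: the three-step chain delivers the inequalities with explicit numerical constants $2^{2/5}$ and $2^{1/5}$, respectively, rather than the bare ``$1$'' that appears in the statement. This is harmless for the usage in Theorem \ref{thm:long}, because the lemma is applied only to extract the exponents and all prefactors are absorbed into the constant $K$ in the proof. A sharper route, if one wanted to pin down the optimal constants, would be Fourier-analytic: write $\|f\|_{L^2}^2=\int|\hat f(\xi)|^2\,\dd\xi$ via Plancherel, split the integral at a threshold $|\xi|\le R$, bound low frequencies by $\|\hat f\|_{L^\infty}^2\le\|f\|_{L^1}^2$ times the length of the cutoff interval and high frequencies by $R^{-4}\|\partial_x^2 f\|_{L^2}^2$, then optimize in $R$; running the analogous argument with an extra factor $\xi^2$ in the integrand produces \eqref{eq:interpol2}.
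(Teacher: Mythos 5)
Your chain of ingredients is exactly the paper's proof, down to the order of substitution, so this is not a different route. The only discrepancy is the constant: as you note, your one-sided version of the sup bound, $f(x)^2=2\int_{-\infty}^{x}f\,\partial_xf\,\dd y$, gives $\|f\|_{L^\infty}^2\le 2\|f\|_{L^2}\|\partial_xf\|_{L^2}$, which propagates to $2^{2/5}$ and $2^{1/5}$ in the final inequalities. The paper proves the lemma with the bare constant $1$ as stated by using the symmetric representation
\begin{align*}
f(a)^2 \;=\; \int_{-\infty}^{a}f\,\partial_xf\,\dd x \;-\; \int_{a}^{\infty}f\,\partial_xf\,\dd x
\;\le\; \|f\|_{L^2(-\infty,a)}\|\partial_xf\|_{L^2(-\infty,a)} + \|f\|_{L^2(a,\infty)}\|\partial_xf\|_{L^2(a,\infty)},
\end{align*}
followed by Cauchy--Schwarz on the sum, yielding $\|f\|_{L^\infty}^2\le\|f\|_{L^2}\|\partial_xf\|_{L^2}$ and thereby the sharp exponents with constant $1$ throughout. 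If you replace your ingredient (ii) by this two-sided version, the remainder of your argument reproduces the statement verbatim; the Fourier-analytic alternative you sketch is perfectly viable too, but it is not the argument in the paper.
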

\begin{proof}
  As usual, it suffices to prove the statement for smooth functions $f$ of compact support.
  As an auxiliary result, we show that 
  \begin{align}
    \label{eq:interpolhelp1}
    \|f\|_{L^\infty}\le\|\partial_xf\|_{L^2}^{\frac12}\|f\|_{L^2}^{\frac12}.
  \end{align}
  By the fundamental theorem of calculus, an integration by parts, and the Cauchy-Schwarz inequality,
  one finds for each $a\in\R$ that
  \begin{align*}
    f(a)^2 
    &= \frac12\int_{-\infty}^a\partial_x(f^2)\dd x - \frac12\int_a^\infty\partial_x(f^2)\dd x \\
    & = \int_{-\infty}^af\partial_xf\dd x -  \int_a^\infty f\partial_x f\dd x \\
    &\le \|f\|_{L^2(\R_{<0})}\|\partial_xf\|_{L^2(\R_{<0})} +  \|f\|_{L^2(\R_{>0})}\|\partial_xf\|_{L^2(\R_{>0})} 
    \le\|\partial_xf\|_{L^2}\|f\|_{L^2}.
  \end{align*}
  With the help of \eqref{eq:interpolhelp1}, we conclude that
  \begin{align*}
    \|f\|_{L^2}^2 = \int_\R f^2\dd x \le \|f\|_{L^\infty}\|f\|_{L^1} 
    \le \|\partial_xf\|_{L^2}^{\frac12}\|f\|_{L^2}^{\frac12}\|f\|_{L^1}.
  \end{align*}
  Collecting powers of $\|f\|_{L^2}$ on the left-hand side yields another auxiliary estimate:
  \begin{align}
    \label{eq:interpolhelp2}
    \|f\|_{L^2}\le\|\partial_xf\|_{L^2}^{\frac13}\|f\|_{L^1}^{\frac23}.
  \end{align}
  As the last ingredient, we note that
  \begin{align}
    \label{eq:interpolhelp3}
    \|\partial_xf\|_{L^2}^2 = \int_\R (\partial_xf)^2\dd x 
    = -\int_\R f\,\partial_x^2f\dd x 
    \le \|\partial_x^2f\|_{L^2}\|f\|_{L^2}.
  \end{align}
  To prove \eqref{eq:interpol1}, 
  substitute the estimate \eqref{eq:interpolhelp3} for $\|\partial_xf\|_{L^2}$ into the right-hand side of \eqref{eq:interpolhelp2},
  then collect powers of $\|f\|_{L^2}$ on the left-hand side.
  To prove \eqref{eq:interpol2}, 
  substitute the estimate \eqref{eq:interpolhelp2} for $\|f\|_{L^2}$ into the last expression in formula \eqref{eq:interpolhelp2},
  then collect powers of $\|\partial_xf\|_{L^2}$.  
\end{proof}
\begin{lemma}
  \label{lem:measuretheory}
  Let a non-negative measurable function $f:\R_{>0}\to\R_{\ge0}$ be given.
  Assume that there are a $C>0$ and some $q<1$ such that
  \begin{align}
    \label{eq:mt1}
    \int_0^T f(t)\dd t \le C(1+T^q) \quad \text{for all $T>0$}.
  \end{align}
  Then, given $\delta>0$ and $p<q$, there is a measurable set $\Theta\subset\R_{>0}$ 
  with the following properites:
  \begin{enumerate}
  \item $\lebm{\Theta\cap\{t\le T\}}\le\delta T^p$ for all $T\ge0$, and
  \item $f(t)\to0$ for $t\to\infty$ and $t\in\R_{>0}\setminus\Theta$.
  \end{enumerate}
  If $q=0$, i.e., $f$ is integrable on $\R_{>0}$, then one can even take $p=0$ above,
  that is, the set $\Theta$ can be chosen of arbitrarily small Lebesgue measure.
\end{lemma}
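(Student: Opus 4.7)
The plan is to construct $\Theta$ as a union of super-level sets of $f$, taken at progressively smaller thresholds and restricted to progressively later time windows. Concretely, I choose two sequences $\eps_k\searrow 0$ and $T_k\nearrow\infty$ (both to be specified), set $\Theta_{\eps_k}:=\{t>0:f(t)\ge\eps_k\}$, and put
\[
\Theta:=\bigcup_{k\ge 1}\bigl(\Theta_{\eps_k}\cap(T_k,\infty)\bigr).
\]
Any $t\notin\Theta$ with $t>T_k$ automatically satisfies $f(t)<\eps_k$, so the convergence $f(t)\to 0$ off $\Theta$ holds for any admissible pair of sequences; all the effort goes into bounding $\lebm{\Theta\cap[0,T]}$. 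The single ingredient driving that bound is Chebyshev's inequality combined with \eqref{eq:mt1},
\[
\lebm{\Theta_{\eps_k}\cap[0,T]}\le\frac{1}{\eps_k}\int_0^T f(s)\dd s\le\frac{C(1+T^q)}{\eps_k}.
\]

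In the generic case $q>0$ (where $p$ must exceed $q$), I would set $\beta:=p-q>0$, $\eps_k:=2^{-k\beta}$, and $T_k:=R\cdot 2^k$ with a tuning parameter $R\ge 1$ fixed at the end. For $T<R$ the set $\Theta\cap[0,T]$ is empty and the desired inequality is trivial. For $T\in[R\cdot 2^K,R\cdot 2^{K+1}]$, only the first $K+1$ indices contribute to the union, and a geometric sum yields
\[
\lebm{\Theta\cap[0,T]}\le C(1+T^q)\sum_{k=1}^{K+1}2^{k\beta}\le C_*\,T^q\Bigl(\frac{T}{R}\Bigr)^{\!\beta}=\frac{C_*}{R^\beta}\,T^{p},
\]
where $C_*$ depends only on $C,p,q$ and I have used $2^{K+1}\le 2T/R$ together with $1+T^q\le 2T^q$ for $T\ge R\ge 1$. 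Picking $R$ large enough in terms of $\delta,C,p,q$ finally gives $C_*/R^\beta\le\delta$, which is the desired bound.

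The borderline case $q=0$ is easier: \eqref{eq:mt1} now forces $f\in L^1(\R_{>0})$. I would pick any sequence $\eps_k\searrow 0$ and, using absolute continuity of the Lebesgue integral, select $T_k$ so large that $\int_{T_k}^\infty f(s)\dd s\le 2^{-k}\delta\eps_k$. The same Chebyshev estimate applied on the tail $(T_k,\infty)$ gives $\lebm{\Theta_{\eps_k}\cap(T_k,\infty)}\le 2^{-k}\delta$, and summation over $k$ yields $\lebm\Theta\le\delta$, uniformly in $T$.

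The only delicate point is the exponent matching in the case $q>0$: the geometric decay rate of $\eps_k$ must be tuned exactly to $\beta=p-q$ so that the sum $C(1+T^q)\sum_k 2^{k\beta}$ produces precisely $T^p$; any other choice either fails summability at the top or leaves a residual power of $T$ that cannot be absorbed into $\delta$. Once the exponents are aligned and the geometric partition at scale $T_k=R\cdot 2^k$ is chosen, fixing $R$ to absorb the multiplicative constant is routine.
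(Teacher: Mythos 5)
Your proof is correct in substance and uses the same two-ingredient strategy as the paper's: Chebyshev's inequality on super-level sets of $f$, together with a discrete time-windowing to make their measures summable and to produce the $T^p$ growth. You do it with dyadic scales ($\eps_k=2^{-k\beta}$, $T_k=R\,2^k$) and absorb the multiplicative constant by choosing $R$ large at the end; the paper instead uses polynomial scales ($\eps_n=1/n$, $T_n\sim n^{3/(p-q)}$), calibrated so that each piece individually satisfies $\lebm{S_n\cap\{t\le T\}}\le\frac{\delta}{n(n+1)}T^p$ and the series telescopes exactly to $\delta T^p$ with no leftover constant to tune. Your treatment of the borderline case $q=0$ (choosing $T_k$ by absolute continuity so the tail integrals are geometrically small) is likewise equivalent to the paper's. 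One point you should make explicit rather than merely parenthetical: the lemma as printed asserts the conclusion for $p<q$, but this is a typo, and the lemma is actually false in that range --- for instance $f=\mathbf{1}_{\bigcup_{n\ge1}[2^n,\,2^n+2^{nq}]}$ satisfies \eqref{eq:mt1}, yet any admissible $\Theta$ must contain a tail of $\{f=1\}$, forcing $\lebm{\Theta\cap[0,T]}\gtrsim T^q\gg\delta T^p$ for large $T$ when $p<q$. The correct hypothesis is $p>q$, which is exactly what your definition $\beta:=p-q>0$ relies on, is the only reading under which the paper's own definition of $T_n$ is meaningful (the exponent there should read $\tfrac{1}{p-q}$, not $\tfrac{1}{q-p}$), and is what the application in Theorem \ref{thm:long} requires: there $q>\tfrac13$ from Lemma \ref{lem:H2} is free to be chosen strictly below the target exponent $p>\tfrac13$.
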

\begin{proof}
  Let $\delta>0$ and $p<q$ be given.
  For each $n=1,2,\ldots$, define the time instance
  \begin{align*}
    T_n:=\max\left(1,\left(\frac{2C}{\delta}n^2(n+1)\right)^{\frac1{q-p}}\right),
  \end{align*}
  and the corresponding measurable set
  \begin{align*}
    S_n:=\left\{t\in\R_{>0}\,:\,f(t)\ge\frac1n,\,t\ge T_n\right\}.
  \end{align*}
  We have for every $T\ge T_n$ that
  \begin{align*}
    \frac1n\lebm{S_n\cap\{t\le T\}}
    &\le\int_0^Tf(t)\dd t \\
    &\le C(1+T^q) \le 2CT^q 
    \le \frac{2C}{T_n^{p-q}}T^p \le \frac1n\frac{\delta}{n(n+1)}T^p.
  \end{align*}
  On the other hand, for every $T<T_n$, 
  we have that $\lebm{S_n\cap\{t\le T\}}=0$ by definition of $S_n$.
  We thus conclude that
  \begin{align}
    \label{eq:Sn}
    \lebm{S_n\cap\{t\le T\}} \le \frac{\delta}{n(n+1)}T^p
  \end{align}
  holds for every $T>0$.
  Now define $\Theta$ as the union of all $S_n$.
  Adding up \eqref{eq:Sn} for all $n=1,2,\ldots$, we obtain
  \begin{align*}
    \lebm{\Theta\cap\{t\le T\}} \le \left(\sum_{n=1}^\infty\frac1{n(n+1)}\right)\delta T^p = \delta T^p.
  \end{align*}
  Let $(t_m)$ be any sequence with $t_m\to\infty$ and $t_m\in\R_{>0}\setminus\Theta$.
  By construction, $t_m>T_n$ implies that $f(t_m)<\frac1n$,
  and therefore $f(t_m)\to0$.

  In the case $q=0$, one can modify the definition of the $T_n$ in order to improve the result.
  Indeed, since $f$ is integrable on $\R_{>0}$, each set $S_n$ has finite Lebesgue measure,
  for any choice of $T_n$.
  By the properties of the Lebesgue measure, one can make $\lebm{S_n}$ smaller than any given positive value
  by choosing $T_n$ sufficiently large.
  In particular, we can choose $T_n$ such that
  \[ \lebm{S_n} \le \frac{\delta}{n(n+1)}, \]
  which replaces \eqref{eq:Sn} above.
\end{proof}

%%%%%%%%%%%%%%%%

\bibliography{lit-arxiv}

\def\cprime{$'$}
\begin{thebibliography}{10}
\expandafter\ifx\csname url\endcsname\relax
  \def\url#1{\texttt{#1}}\fi
\expandafter\ifx\csname urlprefix\endcsname\relax\def\urlprefix{URL }\fi

\bibitem{amann}
H.~Amann, Dynamic theory of quasilinear parabolic systems. {III}. {G}lobal
  existence, Math. Z. 202~(2) (1989) 219--250.

\bibitem{savare2008}
L.~Ambrosio, N.~Gigli, G.~Savar{\'e}, Gradient flows in metric spaces and in
  the space of probability measures, Lectures in Mathematics ETH Z\"urich, 2nd
  ed., Birkh\"auser Verlag, Basel, 2008.

\bibitem{brenier2000}
J.-D. Benamou, Y.~Brenier, A computational fluid mechanics solution to the
  {M}onge-{K}antorovich mass transfer problem, Numer. Math. 84~(3) (2000)
  375--393.

\bibitem{bcc2012}
A.~Blanchet, E.~A. Carlen, J.~A. Carrillo, Functional inequalities, thick tails
  and asymptotics for the critical mass {P}atlak-{K}eller-{S}egel model, J.
  Funct. Anal. 262~(5) (2012) 2142--2230.

\bibitem{blanchet2014}
A.~Blanchet, J.~A. Carrillo, D.~Kinderlehrer, M.~Kowalczyk, P.~Lauren{\c{c}}ot,
  S.~Lisini, A hybrid variational principle for the {K}eller-{S}egel system in
  {$\mathbb{R}^2$}, ESAIM Math. Model. Numer. Anal. 49~(6) (2015) 1553--1576.

\bibitem{blanchet2012}
A.~Blanchet, P.~Lauren{\c{c}}ot, The parabolic-parabolic {K}eller-{S}egel
  system with critical diffusion as a gradient flow in {$\mathbb{R}^d,\
  d\ge3$}, Comm. Partial Differential Equations 38~(4) (2013) 658--686.

\bibitem{carrillo2010}
J.~A. Carrillo, S.~Lisini, G.~Savar{\'e}, D.~Slep{\v{c}}ev, Nonlinear mobility
  continuity equations and generalized displacement convexity, J. Funct. Anal.
  258~(4) (2010) 1273--1309.

\bibitem{daneri2008}
S.~Daneri, G.~Savar{\'e}, Eulerian calculus for the displacement convexity in
  the {W}asserstein distance, SIAM J. Math. Anal. 40~(3) (2008) 1104--1122.

\bibitem{giorgi1983}
E.~De~Giorgi, New problems on minimizing movements, Notes Appl. Math. 2~(4)
  (1983) 81--89.

\bibitem{dns2009}
J.~Dolbeault, B.~Nazaret, G.~Savar{\'e}, A new class of transport distances
  between measures, Calc. Var. Partial Differential Equations 34~(2) (2009)
  193--231.

\bibitem{EGphysD}
C.~M. Elliott, H.~Garcke, Diffusional phase transitions in multicomponent
  systems with a concentration dependent mobility matrix, Physica D: Nonlinear
  Phenomena 109~(3) (1997) 242--256.

\bibitem{giacomelli2001}
L.~Giacomelli, F.~Otto, Variational formulation for the lubrication
  approximation of the {H}ele-{S}haw flow, Calc. Var. Partial Differential
  Equations 13~(3) (2001) 377--403.

\bibitem{gianazza2009}
U.~Gianazza, G.~Savar{\'e}, G.~Toscani, The {W}asserstein gradient flow of the
  {F}isher information and the quantum drift-diffusion equation, Arch. Ration.
  Mech. Anal. 194~(1) (2009) 133--220.

\bibitem{Gurtin}
M.~E. Gurtin, Generalized {G}inzburg-{L}andau and {C}ahn-{H}illiard equations
  based on a microforce balance, Physica D: Nonlinear Phenomena 92~(3) (1996)
  178--192.

\bibitem{jko1998}
R.~Jordan, D.~Kinderlehrer, F.~Otto, The variational formulation of the
  {F}okker-{P}lanck equation, SIAM J. Math. Anal. 29~(1) (1998) 1--17.

\bibitem{kmx2015}
D.~{Kinderlehrer}, L.~{Monsaingeon}, X.~{Xu}, A {W}asserstein gradient flow
  approach to {P}oisson-{N}ernst-{P}lanck equations, {ESAIM} {C}ontrol {O}ptim.
  {C}alc. {V}ar., doi:10.1051/cocv/2015043 (2015).

\bibitem{lady1968}
O.~A. Lady{\v{z}}enskaja, V.~A. Solonnikov, N.~N. Ural{\cprime}ceva, Linear and
  quasilinear equations of parabolic type, Translations of Mathematical
  Monographs, Vol. 23, American Mathematical Society, Providence, R.I., 1968.

\bibitem{laurencot2011}
P.~Lauren{\c{c}}ot, B.-V. Matioc, A gradient flow approach to a thin film
  approximation of the {M}uskat problem, Calc. Var. Partial Differential
  Equations 47~(1-2) (2013) 319--341.

\bibitem{MatLau}
P.~Laurencot, B.-V. Matioc, et~al., A thin film approximation of the muskat
  problem with gravity and capillary forces, Journal of the Mathematical
  Society of Japan 66~(4) (2014) 1043--1071.

\bibitem{liero2013}
M.~Liero, A.~Mielke, Gradient structures and geodesic convexity for
  reaction-diffusion systems, Phil. Trans. R. Soc. A 371~(2005) (2013)
  20120346/1--20120346/28.

\bibitem{liso}
S.~Lisini, E.~Mainini, A.~Segatti, A gradient flow approach to the porous
  medium equation with fractional pressure, preprint. arXiv:1606.06787 (2016).

\bibitem{lisini2010}
S.~Lisini, A.~Marigonda, On a class of modified {W}asserstein distances induced
  by concave mobility functions defined on bounded intervals, Manuscripta Math.
  133~(1-2) (2010) 197--224.

\bibitem{lisini2012}
S.~Lisini, D.~Matthes, G.~Savar{\'e}, Cahn-{H}illiard and thin film equations
  with nonlinear mobility as gradient flows in weighted-{W}asserstein metrics,
  J. Differential Equations 253~(2) (2012) 814--850.

\bibitem{lmz2015}
D.~Loibl, D.~Matthes, J.~Zinsl, Existence of weak solutions to a class of
  fourth order partial differential equations with {W}asserstein gradient
  structure, {P}otential {A}nal., doi:10.1007/s11118-016-9565-y (2016).

\bibitem{matthes2009}
D.~Matthes, R.~J. McCann, G.~Savar{\'e}, A family of nonlinear fourth order
  equations of gradient flow type, Comm. Partial Differential Equations
  34~(10-12) (2009) 1352--1397.

\bibitem{mccann1997}
R.~J. McCann, A convexity principle for interacting gases, Adv. Math. 128~(1)
  (1997) 153--179.

\bibitem{otto2001}
F.~Otto, The geometry of dissipative evolution equations: the porous medium
  equation, Comm. Partial Differential Equations 26~(1-2) (2001) 101--174.

\bibitem{otto2005}
F.~Otto, M.~Westdickenberg, Eulerian calculus for the contraction in the
  {W}asserstein distance, SIAM J. Math. Anal. 37~(4) (2005) 1227--1255.

\bibitem{renardy2004}
M.~Renardy, R.~C. Rogers, An introduction to partial differential equations,
  vol.~13 of Texts in Applied Mathematics, 2nd ed., Springer-Verlag, New York,
  2004.

\bibitem{rossi2003}
R.~Rossi, G.~Savar{\'e}, Tightness, integral equicontinuity and compactness for
  evolution problems in {B}anach spaces, Ann. Sc. Norm. Super. Pisa Cl. Sci.
  (5) 2~(2) (2003) 395--431.

\bibitem{zinsl2012}
J.~Zinsl, Existence of solutions for a nonlinear system of parabolic equations
  with gradient flow structure, Monatsh. Math. 174~(4) (2014) 653--679.

\bibitem{zinsl2015}
J.~Zinsl, A note on the variational analysis of the parabolic--parabolic
  {K}eller--{S}egel system in one spatial dimension, C. R. Math. Acad. Sci.
  Paris 353~(9) (2015) 849--854.

\bibitem{zinsl2016}
J.~Zinsl, The gradient flow of a generalized {F}isher information functional
  with respect to modified {W}asserstein distances, {P}reprint.
  arXiv:1603.01375 (2016).

\bibitem{zinsl2016b}
J.~Zinsl, Well-posedness of evolution equations with time-dependent nonlinear
  mobility: a modified minimizing movement scheme, {P}reprint. arXiv:1604.07694
  (2016).

\bibitem{zm2014}
J.~Zinsl, D.~Matthes, Transport distances and geodesic convexity for systems of
  degenerate diffusion equations, Calc. Var. Partial Differential Equations
  54~(4) (2015) 3397--3438.

\end{thebibliography}

\end{document}